\newtheorem{thm}{Theorem}[section]
\newtheorem{prop}[thm]{Proposition}
\newtheorem{rem}[thm]{Remark}
\def\be{\begin{eqnarray}}
\def\ee{\end{eqnarray}}
\def\ben{\begin{eqnarray*}}
\def\een{\end{eqnarray*}}
\numberwithin{equation}{section}
\numberwithin{figure}{section}
\def\be{\begin{eqnarray}}
\def\ee{\end{eqnarray}}
\def\me{\medskip\noindent}
\def\bi{\bigskip\noindent}
\newcommand{\Sgn}{\mbox{Sgn}}
\newcommand{\Co}{\mathcal{C}}
\def\D{\mathbb{D}}
\def\N{\mathbb{N}}
\def\R{\mathbb{R}}
\def\E{\mathbb{E}}
\def\X{\mathcal{X}}
\title{\bf Stochastic dynamics for adaptation and evolution of microorganisms}
\author{Sylvain Billiard\thanks{Univ. Lille, CNRS, UMR 8198 - Evo-Eco-Paleo, F-59000 Lille, France; E-mail: \texttt{sylvain.billiard@univ-lille1.fr}}, \quad Pierre Collet\thanks{CPHT, Ecole Polytechnique, CNRS, route de
    Saclay, 91128 Palaiseau Cedex-France; E-mail: \texttt{collet@cpht.polytechnique.fr}}, \quad R\'egis Ferri\`ere\thanks{Eco-Evolution Math\'ematique,
    CNRS
    UMR 7625, Ecole Normale Sup\'erieure, 46 rue d'Ulm, 75230 Paris, France; E-mail: \texttt{ferriere@biologie.ens.fr}}, \quad  Sylvie M\'el\'eard\thanks{CMAP, Ecole Polytechnique, CNRS, route de
    Saclay, 91128 Palaiseau Cedex-France; E-mail: \texttt{sylvie.meleard@polytechnique.edu}}, \quad Viet Chi Tran\thanks{Univ. Lille, CNRS, UMR 8524 - Laboratoire Paul Painlev\'e, F-59000 Lille, France; E-mail: \texttt{chi.tran@math.univ-lille1.fr}}}
\date{\today}
\begin{document}

\maketitle

\begin{abstract}
 We present a model for the dynamics of a population of bacteria with a continuum of traits, who compete for resources and exchange horizontally (transfer) an otherwise vertically inherited trait with possible mutations. Competition influences individual demographics, affecting population size, which feeds back on the dynamics of transfer. We consider a stochastic individual-based pure jump process taking values in the space of point measures, and whose jump events describe the individual reproduction, transfer and death mechanisms.  In a large population scale, the stochastic process is proved to converge to the solution of a nonlinear integro-differential equation. When there are only two different traits and no mutation, this equation reduces to a non-standard two-dimensional dynamical system. We show how crucial the forms of the transfer rates are for the long-term behavior of its solutions. We describe the dynamics of invasion and fixation when one of the two traits is initially rare, and compute the invasion probabilities. Then, we study the process under the assumption of rare mutations. We prove that the stochastic process at the mutation time scale converges to a jump process which describes   the successive invasions of successful mutants. We show that  the horizontal transfer can have a major impact on the distribution of the successive mutational fixations, leading to dramatically different behaviors, from expected evolution scenarios to evolutionary suicide.  Simulations are given to illustrate these phenomena.
\end{abstract}

\me Keywords:
horizontal gene transfer, bacterial conjugation, stochastic individual-based models, long time behavior, large population approximation, interactions, fixation probability, trait substitution sequence, adaptive dynamics, canonical equation.

\bigskip
\emph{MSC 2000 subject classification:}  92D25, 92D15, 92D30, 60J80, 60K35, 60F99.
\bigskip

\bigskip

\section{Introduction and  biological context}
A distinctive signature of living systems is Darwinian evolution, that is, a propensity to generate as well as self-select individual
diversity. To capture this essential feature of life while describing the dynamics of populations, mathematical models must be rooted in
the microscopic, stochastic description of discrete individuals characterized by one or several adaptive traits and interacting with each
other.
In this paper, we focus on  the mathematical  modeling of bacteria evolution, whose understanding is fundamental in biology, medicine and industry.
The  ability of a bacteria  to survive and  reproduce depends on  its genes,
and  evolution  mainly  results from  the following basic mechanisms:
heredity, i.e. transmission of the ancestral traits to offspring (also called vertical transmission);
   mutation which occurs during  vertical transmission and generates variability of the traits; selection which results from the interaction between individuals and their environment; exchange of genetic information between non-parental individuals during their lifetimes (also called horizontal gene transfer (HGT)).
   In many biological situations, competition between individuals and vertical and horizontal transfers are involved. The combined resulting effects may have a key role in the transmission of an epidemic, in the  development of antibiotic resistances, in epigenetics or for the bacterial degradation of novel compounds such as human-created pesticides.
 There are several mechanisms for horizontal gene transfer:
  transformation, where some DNA filaments directly enter the cell from the surrounding environment;
 transduction, where  DNA is carried and introduced into the cell by viruses (phages); and
  conjugation, when circular DNA (plasmids) replicates into cells and is transmitted from a cell to another one, independently of the chromosome. Conjugation plays a main role for infectious diseases since the genes responsible for virulence or antibiotic resistance are usually carried by plasmids.  In this paper, we  focus on conjugation modeling in order to understand transmission of pathogens and
 the evolution of antibiotic resistances.

 \me We
propose  a general stochastic eco-evolutionary model of population dynamics with horizontal and vertical genetic transmissions. The stochastic process describes a finite population of
discrete interacting individuals characterized by one or
several adaptive phenotypic traits, in the vein of the models developed in \cite{fourniermeleard}.
Other models for HGT have been proposed in the literature, based on the seminal contribution of Anderson and May on host-pathogen deterministic population dynamics \cite{andersonmay1979} (see also \cite{levinetal1979,stewartlevin1977}) or on a population genetics framework without  ecological concern (see \cite{baumdickerpfaffelhuber,novozhilovetal2005,tazzymanbonhoffer}). Additionally, the previous models assume unilateral transfer, dividing the population into two classes: donors and recipients.   In the present paper, we relax most of the previous limitations.

 The stochastic model is a continuous time pure jump process with values in the space of point measures. The jump events are
 births with or without mutation,   horizontal transfers and deaths, with dynamics depending on the trait values of each individual and on the global population.
 Our model covers both cases of frequency- and density-dependent horizontal transfer rates; these dependencies appear as special cases of a more general form of transfer rate, that we call Beddington-DeAngelis by analogy with a similar model used to describe predator-prey contacts (\cite{beddington1975, deangelisetal1975}).\\
 In a large population limit, using ideas developed in Fournier and M\'el\'eard \cite{fourniermeleard}, the stochastic process is shown to converge to the solution of a nonlinear integro-differential equation whose existence and uniqueness are proved. (See also Billiard et al. \cite{billiardcolletferrieremeleardtran} for different evolutionary behaviors
depending on the order of magnitude of
population size, mutation probability and mutation step
size).
In the case where the trait support is composed of two values, the equation reduces to a non-standard two-dimensional dynamical system  whose long time behavior is studied. This study highlights the impact of HGT on the maintenance of polymorphism and the invasion or elimination of pathogen strains.
When a trait is initially rare in the population (e.g. a mutation of the common trait),  its subpopulation is purely stochastic and we explain how HGT can drastically influence its probability of invasion and time to fixation. To do so, we combine  the stochastic behavior of the mutant population size with  the deterministic approximation of the resident population size. Then we assume that  mutations are rare enough to imply a separation between the competition and mutation time scales, following ideas of Champagnat et al. \cite{champagnat06} in a case without HGT. Here, under an Invasion-Implies-Fixation assumption, a pure jump process is derived from the population size process at the mutation time scale, for which the jump measure is strongly affected by the horizontal transfer. In the last section we present simulations in a case of unilateral transfer, which highlight the effect of HGT on evolution. In particular, we show that HGT can completely change the evolutionary outcomes. Depending on the transfer rate, we can obtain dramatically different behaviors, from expected evolution scenarios to evolutionary suicide.

\section{A general stochastic individual-based model for vertical and horizontal trait  transmission}

\subsection{The model}

We model a bacteria
population with a stochastic system of interacting
individuals (cf. Fournier-M\'el\'eard \cite{fourniermeleard}, Champagnat-Ferri\`ere-M\'el\'eard \cite{champagnatferrieremeleard2, champagnatferrieremeleard}). Each individual is characterized by a quantitative parameter $x$, called trait,  which belongs to a compact subset $\,{\cal X}\,$ of $\, \R^d$ and summarizes the phenotype or genotype of the individual. The trait determines the  demographic rates. It is inherited from parent to offspring (reproduction is asexual), except when a mutation occurs, in which case the trait of the offspring takes a new value. It can also be transmitted by horizontal transfer from an individual to another one.   The demographic and ecological rates are scaled by  $K$ which is taken as a measure of the "system size" (resource limitation, living area, carrying capacity,  initial number of individuals).
We will derive
macroscopic models from the individual process by letting
$\,K\,$ tend to infinity  with the appropriate renormalization ${1\over K}$ for individuals' weight.

\me  At each time $t$, the population is described by the point measure
$$\nu_t^K(dx)=\frac{1}{K}\,\sum_{i=1}^{N^K_t}\, \delta_{X_i(t)}(dx).$$
 $N^K_t= K \int \nu_t^K(dx)\, $ is  the size of the population at time $t$ and  $X_i(t)$ the trait of the $i$-th individual living at $t$, individuals being ranked by lexicographical trait values.

\me Let us now describe the transitions of the measure-valued Markov process $\,(\nu_t^K, t\geq 0)\,$.

\me An individual  with trait $x$ gives birth to a new individual with rate $b_{K}(x)$. With probability $1-p_K$, the new individual carries the trait $x$ and with probability $p_K$, there is a mutation on the trait. The trait of the new individual is $z$  chosen in the probability distribution $m(x,dz)$.

\smallskip  \noindent An individual with trait $x$ dies with intrinsic death rate $d_{K}(x)$ or from the competition with any other individual alive at the same time. If the competitor has  the trait   $y$, the additional death rate is  $ C_{K}(x,y)$. Then in the population $\nu={1\over K}\sum_{i=1}^n \delta_{x_i}$,  the individual death rate due to competition is  $\,\sum_{i=1}^n {C_{K}}(x,x_i) = KC_{K}*\nu(x)$.

\smallskip  \noindent  In addition, individuals can exchange genetic information. Horizontal transfers can occur in both directions: from individuals $x$ to $y$ or the reverse, possibly at different rates. In a population $\nu$, an individual with trait  $x$ chooses a partner with trait $y$ at rate $h_K(x,y,\nu)$.  After transfer, the couple $(x,y)$ becomes $(T_1(x,y),T_2(x,y))$. In the specific case of bacterial conjugation, the recipient $y$ acquires the trait $x$ of the donor (i.e. $(T_{1}(x,y),T_{2}(x,y)) = (x,x)$). This occurs for instance  when the donor transmits a copy of its plasmid to individuals devoid of plasmid (in that case, transfer is unilateral).
 We refer to the paper of Hinow et al. \cite{hinowlefollmagalwebb} for other examples.

\subsection{Generator}
We denote by $\,{\cal M}_{K}(\X)\,$ the set of point measures on $\,{\cal X}$ weighted by $1/K$ and by $M_{F}(\X)$ the set of finite measures on ${\cal X}$.
The generator of the process $(\nu^K_t)_{t\geq 0}$ is given for measurable bounded functions $F$ on $\,{\cal M}_{K}(\X)\,$ and  $\nu={1\over K}\sum_{i=1}^n \delta_{x_i}$ by
\begin{align}
L^KF(\nu)= &\sum_{i=1}^n b_{K}(x_i)(1-p_K) \big(F(\nu +{1\over K}\delta_{x_i}) - F(\nu)\big)\nonumber\\
 +&\sum_{i=1}^n b_{K}(x_i)\, p_K \int_{\cal X} \big(F(\nu +{1\over K}\delta_{z}) - F(\nu)\big) m(x_i, dz)\nonumber\\
 +&\sum_{i=1}^n \big(d_{K}(x_i)+ KC_{K}*\nu(x_i)\big)\big(F(\nu  -{1\over K}\delta_{x_i}) - F(\nu)\big)\nonumber\\
 +&\sum_{i,j=1}^n h_K(x_i,x_j,\nu)\big(F(\nu +{1\over K}\delta_{T_1(x_i,x_j)}+{1\over K}\delta_{T_2(x_i,x_j)} -{1\over K}\delta_{x_i} -{1\over K}\delta_{x_j}) - F(\nu)\big).\label{gen}
\end{align}
In particular, if we consider the function $F_{f}(\nu)=\langle \nu, f\rangle$ for a bounded and measurable function $f$ on ${\cal X}$  and $\nu\in{\cal M}_{K}(\X)$ with the notation $\langle \nu,f\rangle = \int f(x) \nu(dx)$, we have
\begin{align}
L^KF_{f}(\nu)= & \int_{\cal X} \nu(dx) \Big[ b_{K}(x)\Big((1-p_K) f(x) + p_K\int_{\cal X} f(z) m(x,dz)  \Big)\nonumber\\
 & \hspace{1.2cm} - \big(d_{K}(x)+ KC_{K}*\nu(x)\big)f(x)\nonumber\\
 & \hspace{1.2cm} + \int_{\cal X} {K h_K(x,y,\nu)} \Big(f(T_1(x,y))+f(T_2(x,y))-f(x)-f(y)\Big)\nu(dy)
\Big].
\end{align}

\bi Assuming that for any $K$, the functions $b_K$, $d_K$, $K C_K$ and $Kh_{K}$ are bounded,
it is  standard to construct the measure valued process $\,\nu^K\,$ as the solution of a stochastic differential equation driven by Poisson point measures and to derive the following moment and martingale properties (see for example  \cite{fourniermeleard} or Bansaye-M\'el\'eard \cite{bansayemeleard}).

\begin{thm}
 \label{martingales}
  Under the previous assumptions  and assuming that for some $p\geq 2$,

   $\mathbb{E} \left( \langle
      \nu^K_{0},1 \rangle^p \right) <\infty$, we have the following properties.
  \begin{description}
  \item[\textmd{(i)}] For all measurable functions $F$ from ${\cal
      M}_K(\X)$ into $\mathbb{R}$ such that for some constant $C$ and for all
    $\nu \in {\cal M}_K(\X)$, $\vert F(\nu) \vert + \vert L^KF(\nu)
    \vert \leq C (1+\left< \nu,1 \right>^p)$, the process
    \begin{equation}
      \label{pbm2}
     F(\nu^K_t) - F(\nu^K_0) - \int_{0}^t  L^KF (\nu^K_s) ds
    \end{equation}
    is a c\`adl\`ag $({\cal F}_t)_{t\geq 0}$-martingale starting from
    $0$.
  \item[\textmd{(ii)}] For  a  bounded and measurable  function $f$ on ${\cal X}$,
    \begin{align}
&\int f(x) \nu^K_{t }(dx) = \int f(x) \nu^K_{0 }(dx) +    M^{K,f}_t \notag \\
&+\int_{0}^t\int_{\cal X} \bigg\{\Big((1-p_K)b_{K}(x)
      -d_{K}(x) -KC_{K}*\nu^K_{s }(x)\Big)f(x)+p_K b_{K}(x)\int_{\cal X}f(z)\,m(x,dz)  \nonumber\\
&+  \int_{\cal
        X}{K h_K(x,y,\nu^K_s)} \Big(f(T_1(x,y))+f(T_2(x,y))-f(x)-f(y)\Big)\nu^K_s(dy)\bigg\}
   \nu^K_{s }(dx) ds
  , \label{eq:mart}
    \end{align}
   where  $\,M^{K,f}\,$ is a c\`adl\`ag square integrable martingale starting from $0$
    with quadratic variation
    \begin{align}
      \langle M^{K,f}\rangle_t &= {1\over K}\int_{0}^t \int_{\cal
        X}\bigg\{\Big((1-p_K)b_{K}(x)+d_{K}(x)+KC_{K}*\nu^K_s(x)\Big)f^2(x)
      \notag \\ &+p_Kb_{K}(x)\int_{\cal X}f^2(z)\,m(x,dz)\nonumber\\
      & +  \int_{\cal
        X}{K h_K(x,y,\nu^K_s)} \Big(f(T_1(x,y))+f(T_2(x,y))-f(x)-f(y)\Big)^2\nu^K_s(dy)
      \bigg\} \nu^K_s(dx) ds. \label{qv1}
    \end{align}
  \end{description}
\end{thm}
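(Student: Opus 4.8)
The plan is to realize $(\nu^K_t)_{t\ge 0}$ as the unique solution of a stochastic differential equation driven by a finite family of independent Poisson point measures, following the pathwise construction of Fournier--M\'el\'eard \cite{fourniermeleard} (see also Bansaye--M\'el\'eard \cite{bansayemeleard}): one Poisson measure on $\R_+\times\N^*\times[0,1]$ encodes clonal births, one on $\R_+\times\N^*\times[0,1]\times\X$ encodes mutant births (the extra variable being used to sample $m(x_i,dz)$), one on $\R_+\times\N^*\times[0,1]$ encodes deaths with a thinning variable selecting the competition part, and one on $\R_+\times(\N^*)^2\times[0,1]$ encodes the transfer events $(x_i,x_j)\mapsto(T_1(x_i,x_j),T_2(x_i,x_j))$, each index being read off the current configuration (ranked lexicographically as in the model description). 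Under the boundedness of $b_K$, $d_K$, $KC_K$ and $Kh_K$, the total jump rate in a configuration with $n$ individuals is at most of order $n+n^2$, so the jump times do not accumulate once the population size is controlled, and existence and uniqueness follow by the usual induction on successive jump times.

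First I would prove the moment estimates on which the whole statement rests. A transfer event leaves $N^K_t=K\langle\nu^K_t,1\rangle$ unchanged (one individual removed, one added on each side), deaths and competition can only decrease it, and births increase it by one at total rate at most $N^K_t\,\sup_{x\in\X}b_K(x)$; hence $N^K_t$ is stochastically dominated by a Yule process of per-capita birth rate $\sup_{x\in\X}b_K(x)$ started from $N^K_0$, which has finite moments of every order on each bounded time interval as soon as $\E((N^K_0)^p)<\infty$. This legitimates applying the Dynkin formula to $\nu\mapsto\langle\nu,1\rangle^p$; the competition term contributes with a favourable sign, the birth increments are controlled by convexity, since $|(\langle\nu,1\rangle\pm\tfrac1K)^p-\langle\nu,1\rangle^p|\le \tfrac{C}{K}(1+\langle\nu,1\rangle^{p-1})$, and Gronwall's lemma together with $\E(\langle\nu^K_0,1\rangle^p)<\infty$ yields $\sup_{t\le T}\E(\langle\nu^K_t,1\rangle^p)<\infty$, whence, through Doob's inequality applied to the attached martingale, $\E(\sup_{t\le T}\langle\nu^K_t,1\rangle^p)<\infty$ for every $T>0$.

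For (i), I would apply the It\^o formula for pure jump processes to $F(\nu^K_t)$. Writing the jumps against the Poisson measures and subtracting their intensities, the finite-variation part that emerges is, by inspection of \eqref{gen}, exactly $\int_0^t L^KF(\nu^K_s)\,ds$. The growth bound $|F(\nu)|+|L^KF(\nu)|\le C(1+\langle\nu,1\rangle^p)$ together with the moment estimate makes $s\mapsto L^KF(\nu^K_s)$ integrable on $[0,t]$ in expectation and controls the compensated jump integral, so that, after localizing along $\tau_n=\inf\{t:\langle\nu^K_t,1\rangle\ge n\}$ and letting $n\to\infty$ using the uniform integrability furnished by the $p$-th moment bound ($p\ge2$), \eqref{pbm2} is a genuine c\`adl\`ag $(\mathcal F_t)$-martingale starting at $0$, not merely a local one.

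Finally, (ii) is the specialization of (i) to $F_f(\nu)=\langle\nu,f\rangle$, plus a direct computation of the bracket. One has $|F_f(\nu)|\le\|f\|_\infty\langle\nu,1\rangle$ and, from the displayed formula for $L^KF_f$, the only superlinear contribution is the competition term $KC_K*\nu$ paired with $\nu$, so $|L^KF_f(\nu)|\le C_f(1+\langle\nu,1\rangle^2)$; thus (i) applies with $p=2$ and gives \eqref{eq:mart} with $M^{K,f}$ the compensated jump integral. I would not obtain \eqref{qv1} by applying (i) to $F_f^2$, since $L^K(F_f^2)$ carries a cubic term $\langle\nu,f\rangle\int KC_K*\nu\,f\,d\nu$ and would require $p\ge3$; instead I read the predictable bracket directly off the Poisson representation, using $\bigl\langle\int_0^\cdot\!\int g\,\widetilde N\bigr\rangle_t=\int_0^t\!\int g^2\,\widehat N$, and sum, over the four jump types, the square of the corresponding increment of $\langle\cdot,f\rangle$ --- namely $f(x_i)/K$ for a clonal birth, $f(z)/K$ for a mutant birth, $-f(x_i)/K$ for a death, and $(f(T_1(x_i,x_j))+f(T_2(x_i,x_j))-f(x_i)-f(x_j))/K$ for a transfer --- against its transition rate; the overall $1/K$ in \eqref{qv1} comes out of pairing these squared increments, of order $K^{-2}$, with the rates, of order $K$. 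Since the right-hand side of \eqref{qv1} is bounded by $C_f\int_0^t(1+\langle\nu^K_s,1\rangle^2)\,ds$, whose expectation is finite by the moment bound, $M^{K,f}$ is square integrable. There is no deep obstacle here: the real work is the moment control of $N^K_t$ and the verification that the compensated integrals are true martingales, the only delicate point being the tension between the quadratic competition nonlinearity and the minimal assumption $p\ge2$, which is precisely why \eqref{qv1} is obtained by direct computation on the Poisson measures rather than via $L^K(F_f^2)-2F_f\,L^KF_f$.
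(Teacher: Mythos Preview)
Your proposal is correct and follows precisely the approach the paper itself invokes: the paper gives no proof of this theorem, merely stating that the construction via Poisson point measures and the derivation of the moment and martingale properties are ``standard'' and referring to \cite{fourniermeleard} and \cite{bansayemeleard}. Your sketch fills in exactly those standard steps---the SDE construction, the Yule domination for the moment bounds, the Dynkin/It\^o argument with localization for (i), and the specialization to $F_f$ for (ii)---and your observation that \eqref{qv1} should be read off the Poisson compensators directly (rather than via $L^K(F_f^2)-2F_fL^KF_f$, which would require a third moment) is a genuine refinement over a naive reading of the references, since the theorem only assumes $p\ge 2$.
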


\section{Large population limit and rare mutation in the ecological time-scale}\label{section:ODE}

\subsection{A deterministic approximation}
We derive some macroscopic approximation by letting the scaling parameter $K$ tend to infinity with the additional assumption of rare mutation, i.e.
\be
\label{mutrare}\lim_{K\to \infty} p_K  = 0.\ee The timescale is unchanged. It is  called the `ecological' timescale of births, interactions (competition and transfer), and deaths.

\me The next hypotheses describe the different scalings  considered in the paper. 

\me {\bf Assumptions $(H)$}

 \me {\it (i) When $K$ tends to infinity, the stochastic  initial point measures $\,\nu^{K}_0 $ converge  in probability (and for the weak topology)  to the deterministic measure $ \xi_{0}\in M_{F}({\cal X})\,$ and
 $$\sup_{K} \mathbb{E}(\langle \nu^K_0, 1\rangle^3)<\infty.$$

\me (ii) When $K \rightarrow \infty$, the functions $b_{K}$ and  $d_{K}$ (respectively $K C_{K}$) converge uniformly on ${\cal X}$ (respectively on ${\cal X}\times {\cal X}$) to the continuous functions $b$ and $d$ (respectively to  $C$).

\me (iii)  We assume that for any $x, y\in {\cal X}$,
$$b(x)-d(x) >0\,, \,C(x,y)>0.$$
This means that in absence of competition, the subpopulation with trait $x$ is super-critical and that the regulation of the population size comes from the competition. We  denote  by
$$r(x)=b(x)-d(x)$$
the intrinsic  growth rate of the subpopulation of trait $x$.

\me (iv)  When $K \rightarrow \infty$, the functions   $K h_K$  converge uniformly on ${\cal X}\times {\cal X}$ to a continuous function $\,h$. This function depends on the mechanism of transfer. More precisely, we assume
\begin{equation}
K h_K(x,y,\nu) \to h(x,y,\nu)= \frac{\tau(x,y)}{\beta + \mu\,\langle \nu, 1\rangle},\label{tauxlimite}
\end{equation}
where $\tau$ is continuous on ${\cal X}\times {\cal X}$.}

\me \begin{rem}The form \eqref{tauxlimite} is derived from the so-called ``Beddington-DeAngelis'' functional response in the ecological literature (\cite{beddington1975, deangelisetal1975}). This function covers different interesting cases regarding HGT. The horizontal transfer rate for an individual with trait $x$ in the population $\nu$ is $\,\int h(x,y,\nu) \nu(dy) = \langle \nu, \tau(x,.)\rangle/(\beta + \mu\,\langle \nu, 1\rangle)$. Assuming $\mu=0$ or $\langle \nu, 1\rangle$ very small gives a density-dependent HGT rate (denoted DD): the individual transfer rate is proportional to the density of the recipients in the population. Assuming $\beta=0$ or $\langle \nu, 1\rangle$ very large gives a frequency-dependent HGT rate (denoted FD): the individual transfer rate is proportional to the frequency of the recipients. Finally, assuming $\beta \neq 0$ and $\mu \neq 0$ gives a mixed HGT rate between frequency and density-dependent HGT rates (denoted BDA). This general case models some experimental observations for plasmids, for which a correlation between the form (density- versus frequency-dependent) of the transfer rate and the size of the population (low size versus close to carrying capacity)  was suggested (Raul Fernandez-Lopez, pers. com.). We will show that the choice of  $h(x,y,\nu)$ has important consequences on the dynamics.\end{rem}

\bi
\begin{prop}
\label{largepopulation} Assume $(H)$ and \eqref{mutrare}.
Let $T>0$. When $K\rightarrow \infty$, the sequence $(\nu^K)_{K\geq 1}$ converges in probability in $\D([0,T], M_F({\cal X}))$ to the deterministic function $\xi\in \Co([0,T], M_F({\cal X}))$ defined for any continuous function  $f$ as unique solution   of
\begin{align}
\langle \xi_t,f\rangle = & \langle \xi_0,f\rangle +\int_0^t \int_{\cal X} \big(r(x)-C*\xi(x)\big)f(x)\xi_s(dx)\ ds\nonumber\\
+ &  \int_0^t \int_{{\cal X}\times {\cal X}} \Big(f(T_1(x,y))+f(T_2(x,y))-f(x)-f(y)\Big)\frac{\tau(x,y)}{\beta +\mu\,  \langle \xi_s,1\rangle}\xi_s(dy)\xi_s(dx) \ ds. \label{edp1}
\end{align}
\end{prop}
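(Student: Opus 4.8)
The plan is to follow the classical tightness/identification/uniqueness scheme for large-population limits of individual-based models, in the spirit of Fournier--M\'el\'eard \cite{fourniermeleard}. Existence of a solution of \eqref{edp1} comes out of the convergence argument (any limit point of $(\nu^K)$ solves it), uniqueness is proved separately by a Gronwall estimate, and together with the fact that the limit is deterministic this upgrades convergence in law to convergence in probability.

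First I would establish the uniform moment bound $\sup_K \mathbb{E}\big[\sup_{t\le T}\langle \nu^K_t,1\rangle^3\big]<\infty$. The key simplification is that horizontal transfer does not change the total number of individuals, since $f\equiv 1$ makes $f(T_1(x,y))+f(T_2(x,y))-f(x)-f(y)=0$; hence in \eqref{eq:mart} with $f\equiv 1$ the transfer integral vanishes, the death and competition contributions are nonpositive, and bounding $b_K$ uniformly (assumption before Theorem \ref{martingales}) together with Doob's inequality and Gronwall's lemma yields the bound from $(H)(i)$. Applying Theorem \ref{martingales} to $F_f(\nu)=\langle\nu,f\rangle$ for each continuous $f$ then gives the semimartingale decomposition $\langle\nu^K_t,f\rangle=\langle\nu^K_0,f\rangle+A^{K,f}_t+M^{K,f}_t$, where $A^{K,f}$ is the drift in \eqref{eq:mart} and, by \eqref{qv1} and the moment bound, $\mathbb{E}[\langle M^{K,f}\rangle_T]=O(1/K)$, so $M^{K,f}\to 0$ in $L^2$.

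For tightness in $\mathbb{D}([0,T],M_F(\mathcal X))$, compactness of $\mathcal X$ makes $\{\mu:\langle\mu,1\rangle\le A\}$ weakly compact, so the moment bound furnishes the compact containment condition, and it suffices to prove tightness of the real processes $\langle\nu^K_\cdot,f\rangle$ for $f$ in a countable dense subset of $C(\mathcal X)$. This I would do with the Aldous--Rebolledo criterion: the drift $A^{K,f}$ has derivative bounded in modulus by $C(1+\langle\nu^K_s,1\rangle^2)$ uniformly in $K$, and $\langle M^{K,f}\rangle$ is uniformly small, so the increments of both parts over a small stopping-time interval are uniformly controlled. Passing to a convergent subsequence $\nu^{K_n}\Rightarrow\xi$ and invoking the Skorokhod representation theorem, I may assume this convergence almost sure; since the jumps of $\langle\nu^K_\cdot,f\rangle$ are bounded by $\|f\|_\infty/K$, the limit $\xi$ has continuous trajectories. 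To identify $\xi$ I pass to the limit in \eqref{eq:mart}: $\langle\nu^{K_n}_t,f\rangle\to\langle\xi_t,f\rangle$, $M^{K_n,f}\to 0$, the mutation term is $O\big(p_{K_n}\sup_{s\le T}\langle\nu^{K_n}_s,1\rangle\big)\to 0$ by \eqref{mutrare}, and the surviving integrands converge using the uniform convergences in $(H)(ii)$--$(iv)$ together with weak continuity, on sets of bounded mass, of $\mu\mapsto\int(r(x)-C*\mu(x))f(x)\,\mu(dx)$ and of $\mu\mapsto\int \big(f(T_1(x,y))+f(T_2(x,y))-f(x)-f(y)\big)\tau(x,y)/(\beta+\mu\langle\mu,1\rangle)\,\mu(dy)\mu(dx)$; the third-moment bound provides the uniform integrability needed to interchange the limit with the time integral. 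Hence $\xi$ solves \eqref{edp1}.

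It remains to prove uniqueness for \eqref{edp1}. Given two solutions $\xi^1,\xi^2$ with $\xi^1_0=\xi^2_0$, I would bound $u(t):=\sup\{|\langle\xi^1_t-\xi^2_t,f\rangle|:\|f\|_\infty\le 1,\ \mathrm{Lip}(f)\le 1\}$ and close a Gronwall inequality $u(t)\le L\int_0^t u(s)\,ds$. For this I need an a priori upper bound on $\langle\xi^i_t,1\rangle$ (a logistic estimate from $r$ bounded and $C>0$ by $(H)(iii)$) and a strictly positive lower bound on $[0,T]$, so that $\beta+\mu\langle\xi^i_s,1\rangle$ stays away from $0$; then $r$, $C$, $\tau$ being bounded and Lipschitz on the compact $\mathcal X$ makes all coefficients of \eqref{edp1} Lipschitz in the measure variable for this distance. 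I expect the main obstacle to be precisely the transfer term: securing the deterministic lower bound on the mass and the continuity/Lipschitz control of the nonlinear, nonlocal functional $\mu\mapsto \tau/(\beta+\mu\langle\mu,1\rangle)$ that is needed both to pass to the limit in the identification step and to close the Gronwall argument; the tightness and the vanishing of the martingale and mutation terms are routine consequences of the boundedness assumptions and the $O(1/K)$ quadratic variation.
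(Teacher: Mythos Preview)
Your scheme is exactly the paper's: standard moment bounds, Aldous--Rebolledo tightness, vanishing martingale and mutation terms, identification, then Gronwall uniqueness. The paper simply refers to \cite{fourniermeleard,bansayemeleard} for the first four steps and spells out only the uniqueness argument.

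The one point that needs correction is your choice of metric in the uniqueness step. You propose to close Gronwall in the bounded-Lipschitz distance and justify this by saying ``$r$, $C$, $\tau$ being bounded and Lipschitz on the compact $\mathcal X$''. But Assumption $(H)$ only gives continuity of $b,d,C,\tau$, not Lipschitz continuity, and nothing at all is assumed on the regularity of $T_1,T_2$. With only continuity, the test functions that arise on the right-hand side---for instance $y\mapsto C(x,y)$, or $y\mapsto \tau(x,y)\big(f(T_1(x,y))+f(T_2(x,y))-f(x)-f(y)\big)$---are bounded but need not be Lipschitz, so you cannot re-absorb them into the bounded-Lipschitz norm of $\xi^1_s-\xi^2_s$.

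The paper avoids this by working directly in total variation: it takes the supremum over all bounded measurable $f$ with $\|f\|_\infty\le 1$, so that any bounded test function appearing in the difference is controlled by $\|\xi^1_s-\xi^2_s\|_{TV}$, using only $\|r\|_\infty,\|C\|_\infty,\|\tau\|_\infty<\infty$ and the a priori upper/lower bounds on $\langle\xi^i_s,1\rangle$ coming from the logistic comparison after \eqref{eq:size_xi}. This is the cleaner route under the stated hypotheses; with that change your argument goes through.
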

Let us note (by choosing $f\equiv 1$) that the total size $\langle \xi,1\rangle $ of the population  satisfies the  equation
\begin{equation}\label{eq:size_xi}\langle \xi_t,1\rangle = \langle \xi_0,1\rangle + \int_0^t \int_{\cal X} \big(r(x)-C*\xi(x)\big) \xi_s(dx)\ ds.\end{equation}
This equation is not closed and cannot be easily  resolved  except when $C$ is a constant function. Nevertheless by Assumptions $(H)$, the functions $r$ and $C$ are bounded above and below by positive constants on $\cal X$. Then  the process $\langle \xi_t,1\rangle$ is bounded above and below by the solutions of two logistic equations which converge to strictly positive limits when $t\rightarrow \infty$. For example, $\forall t\in \R_+,\ \langle \xi_t,1\rangle \geq \underline{n}_t$ where
$$\frac{d\underline{n}_t}{dt}=\underline{r}\ \underline{n}_t - \bar{C} \underline{n}_t^2,$$with the notation $\underline{r}=\min_{x\in \cal X}r(x)$ and $\bar{C}=\max_{x,y\in \cal X}C(x,y)$.

\bi
\begin{proof} The proof is standard and consists in a tightness and uniqueness argument. The reader will follow the steps detailed in \cite{fourniermeleard} or in \cite{bansayemeleard}: uniform moment estimates on finite time interval, tightness of the sequence of laws, continuity of the limiting values, identification of the limiting values as solutions of \eqref{edp1}, uniqueness of  the solution of \eqref{edp1}. The last point deserves attention.
Let us  consider $(\xi_t^1)_{t\in[0,T]}$ and $(\xi_t^2)_{t\in [0,T]}$  two continuous solutions of  \eqref{edp1} with the same initial condition $\xi_0$. From the remark after \eqref{eq:size_xi}, we have  that \be\label{sup}\,\bar{A}_T =\sup_{t\in[0,T]}\langle\xi_t^1+\xi_t^2,1\rangle<\infty \ \hbox{ and } \  \underline{A}_T=\min\big(\inf_{t\in [0,T]}\langle \xi_t^1,1\rangle, \inf_{t\in [0,T]}\langle \xi_t^1,1\rangle\big)>0.\ee
Let $f$ be a real bounded measurable function on ${\cal X}$ such that $\|f\|_{\infty}\leq 1$. We obtain
\ben
\label{dif}
&&\langle\xi^{1}_{t}-\xi^{2}_{t},f\rangle=\int_0^t \int_{\cal X}\Big( \big(r(x)-C*\xi^1_s(x)\big)f(x)(\xi^1_s -\xi^{2}_{s})(dx) - C*(\xi^1_s-\xi^2_s)(x)\,f(x)\,\xi^2_s(dx)\Big)\, ds\notag\\
&+ &  \int_0^t \int_{{\cal X}\times {\cal X}} \Big(f(T_1(x,y))+f(T_2(x,y))-f(x)-f(y)\Big)\,\tau(x,y)\,\bigg(\frac{1}{\beta +\mu\,  \langle \xi^1_s,1\rangle}(\xi^1_s -\xi^{2}_{s})(dy)\xi^1_s(dx) \notag\\
& &\hskip 2cm +  \frac{1}{\beta +\mu\,  \langle \xi^2_s,1\rangle}\xi^2_s(dy)(\xi^1_s-\xi^{2}_{s})(dx) +\Big(\frac{1}{\beta + \mu  \langle \xi^2_s,1\rangle} -  \frac{1}{\beta + \mu  \langle \xi^1_s,1\rangle}\Big)  \xi^{2}_{s}(dy)\xi^1_s(dx)\bigg)\,ds.
\een
By an elementary computation using Assumptions $(H)$ and \eqref{sup},
we obtain that for any $t\in [0,T]$,
\begin{equation}\label{bpsi}
|\langle
\xi^{1}_{t}-\xi^{2}_{t},f\rangle|\leq  C(T)
\int_{0}^{t}\|\xi^{1}_{s}-\xi^{2}_{s}\|_{TV}\,ds,
\end{equation}
where $C(T)$ is a positive constant. So
taking the supremum over all functions $f$ such that
$\|f\|_{\infty}\leq1$ and applying Gronwall's Lemma we conclude
that for all $t\in[0,T]$
\begin{equation}
\|\xi^{1}_{t}-\xi^{2}_{t}\|_{TV}=0.
\end{equation}
Therefore uniqueness holds for \eqref{edp1}.
\end{proof}

\subsection{Trait replacement and the bacteria conjugation subcase}

We now emphasize on the case where horizontal transmission results from the replacement of the recepient's trait by the donor's trait, i.e. $T_1(x,y)=x$ and $T_2(x,y)=x$.

\me In this case, \eqref{edp1} becomes:
\begin{align}
\langle \xi_t,f\rangle = & \langle \xi_0,f\rangle +\int_0^t \int_{\cal X} \big(r(x)-C*\xi(x)\big)f(x)\xi_s(dx)\ ds\notag\\
&\hskip 2cm +   \int_0^t \int_{{\cal X}\times {\cal X}} f(x)\, \frac{\tau(x,y)-\tau(y,x)}{\beta +\mu\,  \langle \xi_s,1\rangle}\,\xi_s(dy)\xi_s(dx) \, ds. \label{edp2}
\end{align}

\me We note that the behavior of the deterministic dynamical system is influenced by
 HGT only through the `horizontal flux' rate $$\alpha(x,y)=\tau(x,y)-\tau(y,x).$$
  In  Section \ref{invasion}, we will show that in contrast, the fully stochastic population process depends not only on the flux $\alpha$ but also on  $\tau$ itself.

   \noindent The horizontal flux rate $\alpha$ quantifies the asymmetry between transfers in either directions and can be positive as well as negative (or zero in the case of perfectly symmetrical transfer). Note that bacteria conjugation is a subcase: a plasmid is transferred from the plasmid bearer $x$ to the empty individual $y$, while the reverse is not possible (emptiness can not be transferred). This corresponds to the case where  $T_1(x,y)=x$ and $T_2(x,y)=x$ and $\tau(y,x)=0$.

\me
\begin{prop}
Assume that the initial measure $\,\xi_0\,$ is absolutely continuous with respect to the Lebesgue measure, then this property propagates in time and for any $t>0$, $\,\xi_t(dx)= u(t,x)dx$, where $u$ is a  weak solution of the integro-differential equation
\be
\label{det}\, \partial_t u(t,x) = \big(r(x)-C*u(t,x)\big) u(t,x)  +{u(t,x)\over \beta +\mu  \|u(t,.)\|_{1}}\int_{{\cal X}} \alpha(x,y) u(t,y) dy,
\ee
with $\,C*u(t,x)= \int C(x,y)u(t,y)dy$.
\end{prop}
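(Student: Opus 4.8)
The plan rests on the observation that the right-hand side of \eqref{det} carries no derivative in $x$: for each frozen $x$ it is a scalar linear ODE whose rate depends on $x$ and, nonlocally, on the functionals $\|u(s,\cdot)\|_{1}$, $C*u(s,\cdot)(x)$ and $\int_{\cal X}\alpha(x,y)u(s,y)\,dy$ of the solution. Let $\xi\in\Co([0,T],M_F({\cal X}))$ be the unique solution of \eqref{edp2} supplied by Proposition~\ref{largepopulation}, and put
\[
g(s,x)=r(x)-C*\xi_s(x)+\frac{1}{\beta+\mu\langle\xi_s,1\rangle}\int_{\cal X}\alpha(x,y)\,\xi_s(dy),
\]
so that, in this subcase, \eqref{edp2} reads $\langle\xi_t,f\rangle=\langle\xi_0,f\rangle+\int_0^t\!\int_{\cal X}g(s,x)f(x)\,\xi_s(dx)\,ds$. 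By Assumptions $(H)$ together with the two-sided bound $0<\underline A_T\le\langle\xi_s,1\rangle\le\bar A_T$ on $[0,T]$ (discussion after \eqref{eq:size_xi}), the function $g$ is jointly Borel and bounded on $[0,T]\times{\cal X}$ by some $\Lambda=\Lambda(T)<\infty$; here the lower bound $\underline A_T>0$ is what controls the denominator in the frequency-dependent case $\beta=0$. The first step is to extend this weak identity from continuous test functions to all bounded Borel $f$, by a functional monotone-class argument: the relevant maps are linear in $f$ and stable under bounded pointwise convergence (dominated convergence, using $|g|\le\Lambda$ and $\int_0^T\langle\xi_s,1\rangle\,ds<\infty$).

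Next I would apply the extended identity to $f=\ind_A$, with $A$ a Borel set of zero Lebesgue measure, to get
\[
\xi_t(A)=\xi_0(A)+\int_0^t\Big(\int_A g(s,x)\,\xi_s(dx)\Big)ds\;\le\;\xi_0(A)+\Lambda\int_0^t\xi_s(A)\,ds .
\]
Since $\xi_0$ is absolutely continuous, $\xi_0(A)=0$, and Gronwall's lemma forces $\xi_t(A)=0$ for all $t\in[0,T]$; as $T$ is arbitrary, $\xi_t\ll\mathrm{Leb}$ for every $t$, i.e. absolute continuity propagates. Write $\xi_t(dx)=u(t,x)\,dx$, picking the jointly measurable version obtained by Lebesgue differentiation, $u(t,x)=\lim_{\eps\downarrow0}\xi_t(B(x,\eps))/|B(x,\eps)|$, for which $\int_0^T\|u(s,\cdot)\|_{1}\,ds=\int_0^T\langle\xi_s,1\rangle\,ds<\infty$.

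Finally I would substitute $\xi_s(dx)=u(s,x)\,dx$ into \eqref{edp2}. Using $\langle\xi_s,1\rangle=\|u(s,\cdot)\|_{1}$, $C*\xi_s(x)=\int_{\cal X}C(x,y)u(s,y)\,dy$ and $\int_{\cal X}\alpha(x,y)\,\xi_s(dy)=\int_{\cal X}\alpha(x,y)u(s,y)\,dy$, this becomes, for every continuous $f$,
\[
\int_{\cal X} f(x)u(t,x)\,dx=\int_{\cal X} f(x)u(0,x)\,dx+\int_0^t\!\int_{\cal X} f(x)\,\Psi\big(u(s,\cdot)\big)(x)\,dx\,ds,
\]
where $\Psi(v)(x)=\big(r(x)-C*v(x)\big)v(x)+\frac{v(x)}{\beta+\mu\|v\|_{1}}\int_{\cal X}\alpha(x,y)v(y)\,dy$ is precisely the right-hand side of \eqref{det}; this is the asserted weak (integrated-in-time, tested in $x$) formulation. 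Solving the scalar ODE in $t$ for a.e.\ $x$ moreover yields the explicit representation $u(t,x)=u(0,x)\exp\!\big(\int_0^t g(s,x)\,ds\big)$. I do not expect a genuine obstacle here: everything hinges on \eqref{det} being differential-free in $x$, so that testing \eqref{edp2} against indicators and invoking Gronwall closes the argument; the only points requiring (routine) care are the monotone-class extension of \eqref{edp2} to Borel test functions, the uniform bound on $g$ (in particular the role of $\underline A_T>0$ when $\beta=0$), and the joint measurability of the density $u$.
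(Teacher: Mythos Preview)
The paper states this proposition without proof, so there is no ``paper's own proof'' to compare against. Your argument is correct and is the natural one: because the right-hand side of \eqref{det} contains no $x$-derivative, the measure-valued equation \eqref{edp2} is, for each fixed test function, a linear relation of the form $\langle\xi_t,f\rangle=\langle\xi_0,f\rangle+\int_0^t\langle\xi_s,gf\rangle\,ds$ with a bounded multiplier $g$; extending to indicator test functions by a monotone-class argument and applying Gronwall to $t\mapsto\xi_t(A)$ for Lebesgue-null $A$ immediately yields propagation of absolute continuity, after which substitution gives the weak form of \eqref{det}. The explicit representation $u(t,x)=u(0,x)\exp\!\big(\int_0^t g(s,x)\,ds\big)$ that you record is a nice by-product and makes joint measurability of $u$ transparent.

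Two minor points worth tightening in a write-up, neither of which is a gap. First, the lower bound $\underline A_T>0$ on $\langle\xi_s,1\rangle$ (needed when $\beta=0$) presupposes $\langle\xi_0,1\rangle>0$; the case $\xi_0=0$ is trivial, so this is harmless but should be said. Second, for the joint measurability of $(t,x)\mapsto u(t,x)$ via Lebesgue differentiation you need measurability of $(t,x,\eps)\mapsto\xi_t(B(x,\eps))$; this follows since $\xi_t\ll\mathrm{Leb}$ for every $t$ kills boundary mass, so weak continuity of $t\mapsto\xi_t$ upgrades to continuity of $t\mapsto\xi_t(B(x,\eps))$. Alternatively, and more simply, you can \emph{define} $u(t,x)$ by the exponential formula and verify directly that $u(t,\cdot)\,dx$ solves \eqref{edp2}, then invoke the uniqueness in Proposition~\ref{largepopulation} to identify it with $\xi_t$.
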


\me Let us mention that at our knowledge, the long time behavior of a solution of this equation is unknown, except in the case without transfer where it  has been studied by Desvillettes et al.  \cite{desvillettes2008}.  Some close equations with transfer have also been considered and studied in the long time by Hinow et al. \cite{hinowlefollmagalwebb} and by Magal-Raoul \cite{magalraoul}.

\bi \section{The two traits case}
\label{section:ODE2}

 \subsection{The dynamical system}\label{sec:LV}

 Let us now assume that the    population  is dimorphic and composed of only two subpopulations characterized by the traits $x$ and $y$.
 We set    ${\cal X}=\{x,y\}$ and define
$\,N^{x,K}_t =  \nu^K_t(\{x\}) \ ;\ N^{y,K}_t =  \nu^K_t(\{y\}) .$ Let us assume that $\,(N^{x,K}_0, N^{y,K}_0)\,$ converges in probability to the deterministic vector $\,(n^x_0, n^y_0)$. Then Proposition \ref{largepopulation} is stated as follows.

\me
\begin{prop}
\label{largepopulationdiploid}
When $\,K \rightarrow \infty\,$, the stochastic process $\,(N^{x,K}_t, N^{y,K}_t)_{t\geq 0}\,$ converges in probability to the solution $\,(n^x_t, n^y_t)_{t\geq 0}\,$ of the following system of ordinary differential equations (ODEs):
\begin{align}
\frac{dn^x}{dt}= & \Big(r(x)-C(x,x)n^x-C(x,y)n^y+ \displaystyle{\alpha(x,y)\over \beta + \mu\,(n^x+n^y)}\,n^y \Big)n^x = P(n^x,n^y)\,; \nonumber\\
\frac{dn^y}{dt}= & \Big(r(y)-C(y,x)n^x-C(y,y)n^y- \displaystyle{\alpha(x,y)\over \beta + \mu\,(n^x+n^y)}\,n^x \Big)n^y = Q(n^x,n^y) .\label{sysdyn}
\end{align}
\end{prop}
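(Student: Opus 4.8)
The plan is to obtain Proposition \ref{largepopulationdiploid} as a direct corollary of Proposition \ref{largepopulation}, by identifying the state space $M_F({\cal X})$ with $\R_+^2$ when ${\cal X}=\{x,y\}$ is a two-point set. Indeed, a finite measure on $\{x,y\}$ is entirely determined by the pair of masses $(n^x,n^y)=(\mu(\{x\}),\mu(\{y\}))$, and $(n^x,n^y)\mapsto n^x\delta_x+n^y\delta_y$ is a homeomorphism from $\R_+^2$ onto $M_F(\{x,y\})$ equipped with the weak topology; hence $\D([0,T],M_F(\{x,y\}))$ is identified with $\D([0,T],\R_+^2)$, under which $\nu^K$ corresponds to $(N^{x,K},N^{y,K})$. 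The hypothesis $(N^{x,K}_0,N^{y,K}_0)\to(n^x_0,n^y_0)$ in probability is then exactly the convergence in probability (for the weak topology) of $\nu^K_0$ to $\xi_0:=n^x_0\delta_x+n^y_0\delta_y$, i.e. Assumption $(H)(i)$, the third-moment bound being part of that assumption; Assumptions $(H)(ii)$--$(iv)$ are inherited from the general model restricted to the finite set $\{x,y\}$, and here $p_K\equiv 0$ so \eqref{mutrare} holds trivially. Since we are moreover in the trait-replacement subcase $T_1(x,y)=T_2(x,y)=x$, Proposition \ref{largepopulation} applies in the form \eqref{edp2}: $\nu^K\to\xi$ in probability in $\D([0,T],M_F(\{x,y\}))$, where $\xi\in\Co([0,T],M_F(\{x,y\}))$ is the unique continuous solution of \eqref{edp2}.

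It then remains to write \eqref{edp2} explicitly in this case. With $\xi_s=n^x_s\delta_x+n^y_s\delta_y$ one has $\langle\xi_s,1\rangle=n^x_s+n^y_s$ and $C*\xi_s(x)=C(x,x)n^x_s+C(x,y)n^y_s$, and similarly at $y$. Every function on the discrete set $\{x,y\}$ is continuous, so \eqref{edp2} may be tested against $f=\ind_{\{x\}}$: the first integral reduces to $\big(r(x)-C(x,x)n^x_s-C(x,y)n^y_s\big)n^x_s$, while in the double transfer integral the factor $f(x')$ forces the outer variable to be $x'=x$, and the inner $y'$-integral contributes $\tau(x,x)-\tau(x,x)=0$ for $y'=x$ and $\big(\tau(x,y)-\tau(y,x)\big)n^y_s=\alpha(x,y)n^y_s$ for $y'=y$; altogether this is the integrated form of $\frac{dn^x}{dt}=P(n^x,n^y)$. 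Testing against $f=\ind_{\{y\}}$ gives symmetrically $\frac{dn^y}{dt}=Q(n^x,n^y)$, the transfer contribution now being $-\alpha(x,y)n^x_s/(\beta+\mu(n^x_s+n^y_s))\cdot n^y_s$. Hence $(n^x_t,n^y_t)_{t\in[0,T]}$ solves \eqref{sysdyn}; since $\xi$ is continuous so is $(n^x,n^y)$, and as $T$ is arbitrary the convergence holds on all of $[0,\infty)$.

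There is essentially no obstacle beyond bookkeeping: all the analytic work --- moment estimates, tightness, identification of limit points, and uniqueness --- is already contained in Proposition \ref{largepopulation}. The only small points deserving attention are the identification of $M_F(\{x,y\})$ with $\R_+^2$ together with its topology, and the reduction of the double transfer integral to the two flux terms $\pm\alpha(x,y)$. If one wishes in addition to check the well-posedness of \eqref{sysdyn} directly, one may note that $P$ and $Q$ are locally Lipschitz --- the denominator $\beta+\mu(n^x+n^y)$ being bounded away from $0$, immediately if $\beta>0$ and otherwise because the logistic comparison recorded after \eqref{eq:size_xi} keeps $n^x+n^y$ bounded below on any finite interval whenever $n^x_0+n^y_0>0$ --- and that solutions remain bounded, so that \eqref{sysdyn} has a unique global solution, consistent with the one produced above.
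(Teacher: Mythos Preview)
Your proposal is correct and follows exactly the approach the paper takes: the paper introduces Proposition~\ref{largepopulationdiploid} by saying ``Then Proposition~\ref{largepopulation} is stated as follows,'' i.e.\ it treats the two-trait result as nothing more than the specialization of the general measure-valued limit to ${\cal X}=\{x,y\}$, without giving a separate proof. You have supplied the bookkeeping the paper omits --- the identification $M_F(\{x,y\})\cong\R_+^2$ and the explicit reduction of \eqref{edp2} to \eqref{sysdyn} by testing against $\ind_{\{x\}}$ and $\ind_{\{y\}}$ --- and this is done correctly.
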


 \me When $\alpha(x,y) \equiv 0$, we get the classical competitive  Lotka-Volterra system. Point $(0,0)$ is an unstable equilibrium and there are 3  stable equilibria: a co-existence equilibrium and two monomorphic equilibria $(\overline n^x,0)$ and  $(0,\overline n^y)$, where
 \be
 \label{equi}\,\overline{n}^{x} = {r(x)\over C(x,x)}\,\ee is the unique stable equilibrium of  the standard logistic equation
\be
\label{logi}\frac{dn}{dt}=  \big(r(x)-C(x,x)n \big)n.\ee
It is well known that the sign  of the invasion  fitness function, defined as
$$f(y;x)= r(y) - C(y,x) \,\overline n^x = r(y) - C(y,x)\,{r(x)\over C(x,x)},$$ governs the stability.
\noindent If $\,f(y;x)<0\,$ and $\,f(x;y)>0$, the system converges to $(\overline n^x,0)$ while  if  $\,f(y;x)>0\,$ and $\,f(x;y)<0$, the system  converges to $(0,\overline n^y)$ and if $\,f(y;x)>0\,$ and $\,f(x;y)>0$, the system  converges to a non trivial co-existence equilibrium.
In the case where the competition kernel  $\,C\,$ is constant and $\,r\,$ is a monotonous function, the fitness function is equal to
  $\,f(y;x)= r(y) - r(x) = - f(x;y)$,  which prevents  co-existence in the limit.

\me When $\alpha(x,y) \neq 0$, the behavior of the system is drastically different as it can be seen in the phase diagrams of Figure \ref{fig:diagram}.

 \me Figure \ref{fig:diagram} shows  eight possible phase diagrams for the dynamical system \eqref{sysdyn}, where the circles and stars indicate stable and unstable fixed points, respectively.   Figures (1)-(4) are possible for all forms of HGT rates, Figures (5)-(6) can happen  in frequency-dependent or  Bedington-deAngelis cases,  while Figures (7)-(8) can be observed only in Bedington-deAngelis case. Compared to the classical two-species Lotka-Volterra system, at least 4 new phase diagrams are possible: Figures (5)-(8).

\begin{figure}
\label{fig:diagram}
\center  \includegraphics[width=12cm]{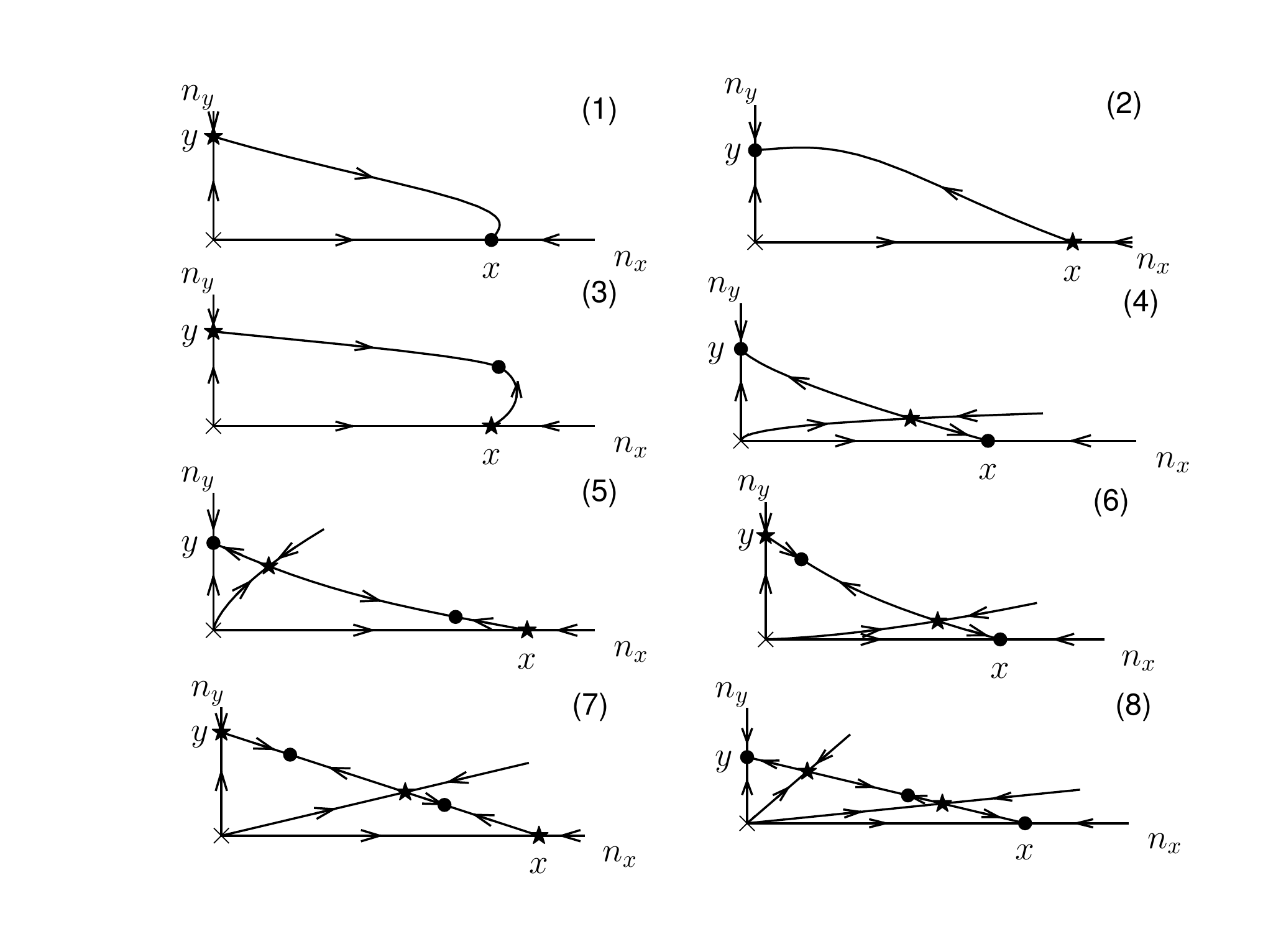}
 \caption{{\small \textit{Phase diagrams for the system \eqref{sysdyn}}}}
\end{figure}

\me Now, define the invasion fitness of individuals with trait $\,y\,$ in the $\,x$-resident population by
\be
\label{fitness} S(y;x) &=&  r(y) - {C(y,x) r(x)\over C(x,x) }+ {\alpha(y,x) r(x)\over \beta C(x,x)+\mu \,r(x)} = f(y;x)+ {\alpha(y,x) r(x)\over \beta C(x,x)+\mu \,r(x)}.\nonumber \\
&&\ee

 \subsection{Properties of the dynamical system \eqref{sysdyn}}
 \label{longte}

\me Let us now analyze the behavior of the system \eqref{sysdyn}.

\bi We first exclude the possibility of cycles contained in the positive quadrant. Recall that a Dulac function $\,\varphi(u,v)\,$ on $(\mathbb{R}_{+}^*)^2$ is a smooth non vanishing  function such that
$$\Big(\partial_{u}(\varphi P) + \partial_{v}(\varphi Q)\Big)(u,v)$$
has the same sign on the domain $(\mathbb{R}_{+}^*)^2$.
\begin{prop}
\label{dulac}
Assume that $\,C(x,x)>0\,$ and $\,C(y,y)>0\,$. Then the function $\,\varphi(u,v) = {1\over u\,v}\,$ is a Dulac function in $(\mathbb{R}_{+}^*)^2$. As a consequence, the system \eqref{sysdyn} has no cycle in $(\mathbb{R}_{+}^*)^2$.
\end{prop}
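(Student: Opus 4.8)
The plan is to verify directly that $\varphi(u,v)=1/(uv)$ satisfies the Dulac condition, the decisive point being an exact cancellation of the transfer contributions, and then to deduce the absence of cycles from the Bendixson--Dulac criterion. Writing $u=n^x$, $v=n^y$, multiplication by $\varphi$ removes from $P$ its overall factor $n^x$ and from $Q$ its overall factor $n^y$, so that
$$\varphi(u,v)\,P(u,v)=\frac{r(x)-C(x,x)u-C(x,y)v}{v}+\frac{\alpha(x,y)}{\beta+\mu(u+v)},$$
$$\varphi(u,v)\,Q(u,v)=\frac{r(y)-C(y,x)u-C(y,y)v}{u}-\frac{\alpha(x,y)}{\beta+\mu(u+v)}.$$
Since $\beta+\mu(u+v)>0$ on $(\mathbb{R}_{+}^*)^2$ and $\varphi$ is smooth and non-vanishing there, both maps are $\Co^1$ on the open quadrant, which is what makes Green's formula applicable below.

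Next I would differentiate. In $\partial_u(\varphi P)$ the rational ``logistic'' part contributes $-C(x,x)/v$ and the transfer term contributes $-\mu\,\alpha(x,y)/(\beta+\mu(u+v))^2$; symmetrically, $\partial_v(\varphi Q)$ contributes $-C(y,y)/u$ together with $+\mu\,\alpha(x,y)/(\beta+\mu(u+v))^2$. Adding, the two transfer terms cancel exactly, leaving
$$\partial_u(\varphi P)(u,v)+\partial_v(\varphi Q)(u,v)=-\frac{C(x,x)}{v}-\frac{C(y,y)}{u}<0\qquad\text{on }(\mathbb{R}_{+}^*)^2,$$
where we used the hypotheses $C(x,x)>0$ and $C(y,y)>0$. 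This shows that $\varphi$ is a Dulac function on $(\mathbb{R}_{+}^*)^2$.

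For the nonexistence of cycles I would then argue by contradiction. If $\gamma$ were a periodic orbit of \eqref{sysdyn} lying entirely in $(\mathbb{R}_{+}^*)^2$, then, the open quadrant being simply connected, the bounded region $R$ enclosed by $\gamma$ (Jordan curve theorem) would also be contained in $(\mathbb{R}_{+}^*)^2$, and Green's formula in flux form would give
$$\iint_R\Big(\partial_u(\varphi P)+\partial_v(\varphi Q)\Big)\,du\,dv=\oint_\gamma \varphi\,(P\,dv-Q\,du),$$
whose right-hand side vanishes because $(P,Q)$ is everywhere tangent to $\gamma$, while the left-hand side is strictly negative by the previous display --- a contradiction. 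There is no real obstacle in this argument: the entire content of the proposition is the algebraic cancellation that lets the classical Lotka--Volterra choice $\varphi=1/(uv)$ survive the addition of the non-polynomial Beddington--DeAngelis term. The only two points deserving a line of care are the $\Co^1$-regularity and non-vanishing of $\varphi\cdot(P,Q)$ on the open quadrant (needed to invoke Green's theorem) and the simple connectedness of $(\mathbb{R}_{+}^*)^2$ (needed so that a closed orbit bounds a region inside the domain).
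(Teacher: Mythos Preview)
Your proof is correct and follows essentially the same approach as the paper: both compute the divergence $\partial_u(\varphi P)+\partial_v(\varphi Q)=-C(x,x)/v-C(y,y)/u<0$ and invoke the Bendixson--Dulac criterion. You simply add more detail, spelling out the exact cancellation of the transfer contributions and reproving Bendixson--Dulac via Green's formula, whereas the paper states the final divergence in one line and cites the theorem.
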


\me
\begin{proof}
A simple computation gives
$$\partial_{u}(\varphi P) + \partial_{v}(\varphi Q)(u,v) = - \frac{C(x,x)\,u + C(y,y)\,v}{u\,v} < 0,$$
for $\,(u,v) \in (\mathbb{R}_{+}^*)^2$.
The Bendixson-Dulac Theorem (see e.g. \cite[Th.7.12 p.189]{ADL} or \cite[Th.1.8.2, p.44]{GH}) allows to conclude that there is no cycle in the domain.
\end{proof}

\bi From this result and the Poincar\'e-Bendixson theorem (\cite[Section 1.7]{ADL} or \cite[Th.1.8.1, p.44]{GH}) we conclude that any accumulation point of any trajectory starting inside the positive quadrant is either a fixed point or is on the boundary.

\bi
Expressing \eqref{sysdyn} in terms of the size of the population $n_t=n^x_t+n^y_t$ and proportion of trait $x$, $p_t=n^x_t/n_t$, we obtain:
\begin{align}
\frac{dn}{dt}= & n\,\Big(p\,r(x) + (1-p)\,r(y) \nonumber\\
 & \hspace{1cm} - C(x,x)\,p^2n - ( C(x,y)+C(y,x))\,p(1-p)n  - C(y,y)\,(1-p)^2 n\Big) \nonumber\\
\frac{dp}{dt}= & p\,(1-p)\,\Big( r(x) - r(y) \nonumber\\
& \hspace{1cm} + np(C(y,x) - C(x,x)) + n(1-p)(C(y,y)-C(x,y)) + \alpha(x,y){ n\over \beta +\mu n} \Big).\label{eq:edo2}
\end{align}
These equations are generalizations of the classical equations of population genetics with two alleles under selection \cite{Roughgarden71}, in which we have made the influence of demography explicit.  Eq. \eqref{eq:edo2} is useful to investigate the dynamics on the boundary of the positive quadrant which is an invariant set.

\begin{prop}
\label{fixed-points}
Let us recall that $$\overline n^x =  \frac{r(x)}{C(x,x)}\, ;\, \overline n^y =  \frac{r(y)}{C(y,y)}.$$
The points $\, (0, 0)\,$, $\,(0, \overline n^y)\,$ and $\,(\overline n^x, 0)\,$ are the only stationary points on the boundary. The origin is unstable and the two other points are stable for the dynamics on the boundary. Their transverse stability/instability is given by the sign of the fitness function $\,S(x;y)\,$ given in \eqref{fitness}.
\end{prop}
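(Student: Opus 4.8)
The plan is a standard phase-plane analysis of the planar system \eqref{sysdyn}. First I would locate the boundary equilibria. Since both $P$ and $Q$ carry an explicit factor $n^x$, respectively $n^y$, the component equation for a vanishing coordinate is automatically satisfied on the corresponding axis; substituting $n^y=0$ into the $n^x$-equation (resp.\ $n^x=0$ into the $n^y$-equation) turns it into the logistic equation \eqref{logi} (resp.\ its analogue with $x$ replaced by $y$). Hence the only zeros of $(P,Q)$ on $\{n^x=0\}\cup\{n^y=0\}$ are $(0,0)$, $(\overline n^x,0)$ and $(0,\overline n^y)$.

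Next, the dynamics on the boundary. The two axes are invariant (already visible from \eqref{eq:edo2}, where $\{p=0\}$ and $\{p=1\}$ are invariant sets), and on each of them the flow is that of the one-dimensional logistic equation $\dot n=(r-Cn)n$ with $r=r(x)$ or $r(y)>0$ and $C=C(x,x)$ or $C(y,y)>0$ by $(H)$. On the half-line this phase line has $0$ as a repelling and $r/C$ as an attracting fixed point. Therefore, for the dynamics restricted to the boundary, $(0,0)$ is unstable while $(\overline n^x,0)$ and $(0,\overline n^y)$ are asymptotically stable.

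For the transverse behaviour I would linearise. At a boundary equilibrium the Jacobian of $(P,Q)$ is triangular: at $(\overline n^x,0)$ the entry $\partial_{n^x}Q$ vanishes because $Q$ has the factor $n^y$, so the eigenvalues are the diagonal entries $\partial_{n^x}P=-C(x,x)\,\overline n^x=-r(x)<0$ (the tangential direction, reconfirming boundary stability) and $\partial_{n^y}Q|_{(\overline n^x,0)}$. Using $\overline n^x=r(x)/C(x,x)$ and $\alpha(y,x)=-\alpha(x,y)$, a short computation gives
\[
\partial_{n^y}Q\big|_{(\overline n^x,0)}=r(y)-\frac{C(y,x)\,r(x)}{C(x,x)}-\frac{\alpha(x,y)\,r(x)}{\beta C(x,x)+\mu\, r(x)}=S(y;x),
\]
and symmetrically $\partial_{n^x}P|_{(0,\overline n^y)}=S(x;y)$, with $S(\cdot\,;\cdot)$ the fitness function \eqref{fitness}. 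By the Hartman--Grobman theorem, $(\overline n^x,0)$ is thus a sink when its transverse eigenvalue $S(y;x)<0$ and a saddle when $S(y;x)>0$, and likewise $(0,\overline n^y)$ according to the sign of $S(x;y)$; at $(0,0)$, when $\beta>0$ so that the transfer term is $C^1$ near the origin, the Jacobian is $\mathrm{diag}(r(x),r(y))$ with two positive eigenvalues, hence a source.

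The only genuinely delicate points are (i)~the non-hyperbolic borderline case where the relevant fitness vanishes, which linearisation does not resolve and which I would simply record as the critical codimension-one case (or treat by a direct sign analysis near the axis); and (ii)~the lack of smoothness of the transfer term at the origin in the purely frequency-dependent case $\beta=0$, which is why the instability of $(0,0)$ is read off from the logistic reduction on the axes rather than from the Jacobian. Everything else is a routine computation.
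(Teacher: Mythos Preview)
Your argument is correct and is precisely the routine phase-plane computation the paper has in mind: the authors write ``The proof is left to the reader'' immediately after the statement, and your analysis (logistic reduction on each invariant axis, triangular Jacobian at the semi-trivial equilibria yielding the transverse eigenvalues $S(y;x)$ and $S(x;y)$) is the standard way to fill this in. Your care about the non-smoothness of the transfer term at the origin when $\beta=0$ is a nice touch and more than the paper asks for.
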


\me The proof is left to the reader. This implies that any accumulation point of any trajectory starting inside the positive quadrant is  a fixed point.
We now investigate the fixed points inside the positive quadrant.

\begin{prop}  Besides the fixed points in the boundary, there is
\begin{enumerate}[i)]
\item
in the BDA case, $\beta \neq 0\,;\,\mu \neq 0$, there are at most $3$     stationary points,

\item
in the FD case ( $\beta = 0\,;\,\mu = 1$), there are at most $2$  stationary points,

\item
in the DD case ($\beta = 1\,;\,\mu = 0$), there is at most $1$  stationary point,
\end{enumerate}
or a line of fixed points inside $\mathbb{R}_{+}^2$.
\end{prop}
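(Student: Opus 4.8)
\medskip\noindent
The plan is to reduce the search for interior equilibria of \eqref{sysdyn} to intersecting two plane algebraic curves, and then count that intersection by B\'ezout's theorem. Write $u=n^x$, $v=n^y$, $n=u+v$, $\alpha=\alpha(x,y)$, and
\[
L_1(u,v)=r(x)-C(x,x)u-C(x,y)v,\qquad L_2(u,v)=r(y)-C(y,x)u-C(y,y)v .
\]
Since $n^x,n^y>0$ and $\beta+\mu n>0$, a point of $(\mathbb{R}_+^*)^2$ is an equilibrium of \eqref{sysdyn} iff it solves
\[
(\mathrm a)\colon\ L_1(u,v)(\beta+\mu n)+\alpha v=0,\qquad
(\mathrm b)\colon\ L_2(u,v)(\beta+\mu n)-\alpha u=0 .
\]
The first step is to extract the $\alpha$--free consequence $u\cdot(\mathrm a)+v\cdot(\mathrm b)$: the transfer terms cancel, and dividing by $\beta+\mu n>0$ gives the conic
\[
\Gamma\colon\quad r(x)u+r(y)v-C(x,x)u^2-\bigl(C(x,y)+C(y,x)\bigr)uv-C(y,y)v^2=0,
\]
which is exactly $\tfrac{dn}{dt}=0$. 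Conversely, on $\{u,v>0\}$ the pair $\{\Gamma,(\mathrm a)\}$ forces $(\mathrm b)$: from $\Gamma$ one has $L_2=-uL_1/v$, and inserting this into $(\mathrm b)$ turns it into $-\tfrac{u}{v}\bigl(L_1(\beta+\mu n)+\alpha v\bigr)=0$, which is $(\mathrm a)$. Hence the interior equilibria are precisely the points of $\Gamma\cap\mathcal C$ lying in the open quadrant, where $\mathcal C=\{(\mathrm a)=0\}$. The point of replacing $(\mathrm b)$ by $\Gamma$ is that $\Gamma$ passes through the boundary points $(0,0)$, $(\overline n^x,0)$ and $(0,\overline n^y)$, some of which also lie on $\mathcal C$; these forced boundary intersections are what sharpen the count beyond what $(\mathrm b)$ alone would give.

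\medskip\noindent
Next I would split according to the degree of $\mathcal C$. The defining polynomial of $\mathcal C$ has quadratic part $-\mu\bigl(C(x,x)u^2+(C(x,x)+C(x,y))uv+C(x,y)v^2\bigr)$ (with $\mu=1$ in the FD case and $\mu=0$ in the DD case), which is nonzero iff $\mu\neq0$, since $C(x,x)>0$; thus $\mathcal C$ is a line in the DD case and an honest conic in the FD and BDA cases, whereas $\Gamma$ is always an honest conic. By B\'ezout's theorem $\Gamma\cap\mathcal C$ therefore has at most $2$ points in the DD case and at most $4$ points in the FD and BDA cases (counted with multiplicity in $\mathbb{P}^2(\mathbb{C})$, hence at most that many distinct real ones). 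I would then discard the boundary intersections that are always present. Since $L_1(\overline n^x,0)=0$, the point $(\overline n^x,0)$ lies on both $\Gamma$ and $\mathcal C$ in all three cases. The origin lies on $\mathcal C$ iff its defining polynomial vanishes there, i.e.\ iff $L_1(0,0)\beta=r(x)\beta=0$, which, since $r(x)>0$, holds precisely when $\beta=0$, i.e.\ only in the FD case (so \emph{not} in the BDA case). Subtracting these from the B\'ezout bound leaves at most $2-1=1$ interior equilibrium in the DD case, at most $4-2=2$ in the FD case, and at most $4-1=3$ in the BDA case, as asserted.

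\medskip\noindent
Everything else is routine: the identity $u\cdot(\mathrm a)+v\cdot(\mathrm b)$, the back-substitution into $(\mathrm b)$, and the on-axis evaluations of $\Gamma$ and $\mathcal C$ are one-line computations that use only $r(x),r(y)>0$ and $C>0$ from Assumptions $(H)$. The one delicate point --- and the one responsible for the ``or a line of fixed points'' alternative in the statement --- is that the B\'ezout bound applies only when $\Gamma$ and $\mathcal C$ share no common component. If they do share one, it is a line (or, in the coincidence case $\Gamma=\mathcal C$, a conic); if that component meets the open quadrant, then by the equivalence established above every one of its points there is an equilibrium, which is the stated line (or curve) of fixed points; if it does not meet the open quadrant, one repeats the argument with the complementary linear factors, where the B\'ezout bound is strictly smaller. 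I expect this enumeration of the degenerate configurations --- rather than the counting itself, which is immediate once the reduction to $\Gamma\cap\mathcal C$ is in hand --- to be the main obstacle.
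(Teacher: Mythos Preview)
Your proof is correct, but it takes a genuinely different route from the paper's. The paper works in the coordinates $(n,p)=(n^x+n^y,\,n^x/n)$ of \eqref{eq:edo2}. From $dn/dt=0$ it solves $n=\dfrac{p\,r(x)+(1-p)\,r(y)}{Q(p)}$ with $Q(p)=C(x,x)p^2+(C(x,y)+C(y,x))p(1-p)+C(y,y)(1-p)^2$, and substitutes into $dp/dt=0$. After clearing denominators this is $\dfrac{p(1-p)}{Q(p)\bigl(\beta Q(p)+\mu(pr(x)+(1-p)r(y))\bigr)}$ times a polynomial in $p$; one then reads off its degree. In the DD case ($\mu=0$) the system is Lotka--Volterra and the polynomial is linear. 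In the FD case ($\beta=0$) it is quadratic. In the BDA case it is \emph{a priori} quartic, but an explicit computation shows the degree-$4$ coefficient vanishes, leaving a cubic. Hence at most $1$, $2$, $3$ interior roots respectively; if the polynomial vanishes identically one gets the line of fixed points.

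Your approach replaces this substitution by B\'ezout for $\Gamma\cap\mathcal C$ and then subtracts the forced boundary intersections $(\overline n^x,0)$ (always) and $(0,0)$ (FD only). The two arguments are in fact two faces of the same computation: the map $p\mapsto(n(p)p,\,n(p)(1-p))$ is a rational parametrisation of the conic $\Gamma$ (a conic through the origin), and the paper's polynomial in $p$ is exactly what you get by plugging that parametrisation into $\mathcal C$. Your B\'ezout bookkeeping trades the paper's explicit cancellation of the quartic term in the BDA case for the identification of the boundary point $(\overline n^x,0)\in\Gamma\cap\mathcal C$; conversely, the paper avoids any discussion of common components or intersections at infinity at the cost of that one algebraic check. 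Either way the degenerate ``line of fixed points'' alternative corresponds to the same event (your common component, their identically-zero polynomial), and neither proof dwells on it.
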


\begin{proof}
 It is easier to consider the system in its form \eqref{eq:edo2}.
 The stationary points are denoted by $( n,  p)$ for convenience. They satisfy
 \begin{align*}
0= & n\,\Big(p\,r(x) + (1-p)\,r(y) \nonumber\\
 & \hspace{1cm} - C(x,x)\,p^2n - ( C(x,y)+C(y,x))\,p(1-p)n  - C(y,y)\,(1-p)^2 n\Big) \nonumber\\
0= & p\,(1-p)\,\Big( r(x) - r(y) \nonumber\\
& \hspace{1cm} + np(C(y,x) - C(x,x)) + n(1-p)(C(y,y)-C(x,y)) + \alpha(x,y){ n\over \beta +\mu n} \Big).
\end{align*}
   If $n\neq 0$ and $p\notin \{0,1\}$,  we deduce from the first equation that
  $$n = \frac{p r(x)+(1-p)r(y)}{Q( p)}$$where$$Q( p)=C(x,x) p^2 + (C(x,y) + C(y,x))p(1-p) +C(y,y) (1-p)^2\neq 0$$
 for $p\in(0,1)$.
Replacing $n$ by this quantity, we write   the second equation as
\begin{multline*}
0=\frac{p(1-p)}{Q( p)\big(\beta Q( p) + \mu(p r(x) +(1-p)r(y))\big)} \times \\
\begin{aligned}
&\bigg((r(x)-r(y))Q( p)\big(\beta Q( p) + \mu(p r(x) +(1-p)r(y)\big) \\
&\qquad +(p r(x) +(1-p)r(y)) \big(\beta Q( p) + \mu(p r(x) +(1-p)r(y))\big)\\
&\qquad \qquad \qquad \qquad
 \big(p(C(y,x)-C(x,x))+(1-p)(C(y,y)-C(x,y)\big)\\
&\qquad + \alpha(x,y) \big(p r(x) +(1-p)r(y)\big)Q( p)\bigg) .
\end{aligned}
\end{multline*}
When $\beta\neq 0$ and $\mu\neq 0$ (BDA case),   the term between the large brackets   is  a priori a polynomial in $p$ of degree $4$. But explicit computation shows that the term of order $4$ vanishes. Then this polynomial is of degree $3$ and there are at most $3$ stationary points inside the domain. In  FD cases, the expression simplifies as $\frac{p(1-p)}{Q( p)}$ times a polynomial of degree $2$ and there are at most two stationary points. The DD case reduces to a Lotka-Volterra system. \end{proof}

\noindent To obtain more insight on the limiting dynamics, we use the Poincar\'e index (see \cite[Chapter 6]{ADL} or \cite[p.50-51]{GH}).\\

\smallskip \noindent Let us first remark  that  the trace of the Jacobian matrix of  any  fixed point $(u_{0}, v_{0})$ inside $\mathbb{R}_{+}^2$, is equal to
$$- C(x,x) \,u_{0} - C(y,y) \,v_{0} <0.$$ This implies that any fixed point inside the positive quadrant is either a sink (index $1$), a saddle (index $-1$) or a non-hyperbolic point of index $0$ with a negative eigenvalue of the Jacobian matrix (because the vector field is analytic, see \cite[Th.6.34]{ADL}).
We use the circuit  with anticlockwise orientation drawn in Fig. \ref{fig:2}.   The largest radius is chosen large enough such that there are no  fixed points outside the loop. The fixed points $ (\overline n^{x},0)$ and $ (0,\overline n^{y})$ on the boundaries are denoted by $A$ and $a$ on Fig. \ref{fig:2}.

\begin{figure}[!ht]
\label{fig:2}
\center \includegraphics[width=6cm]{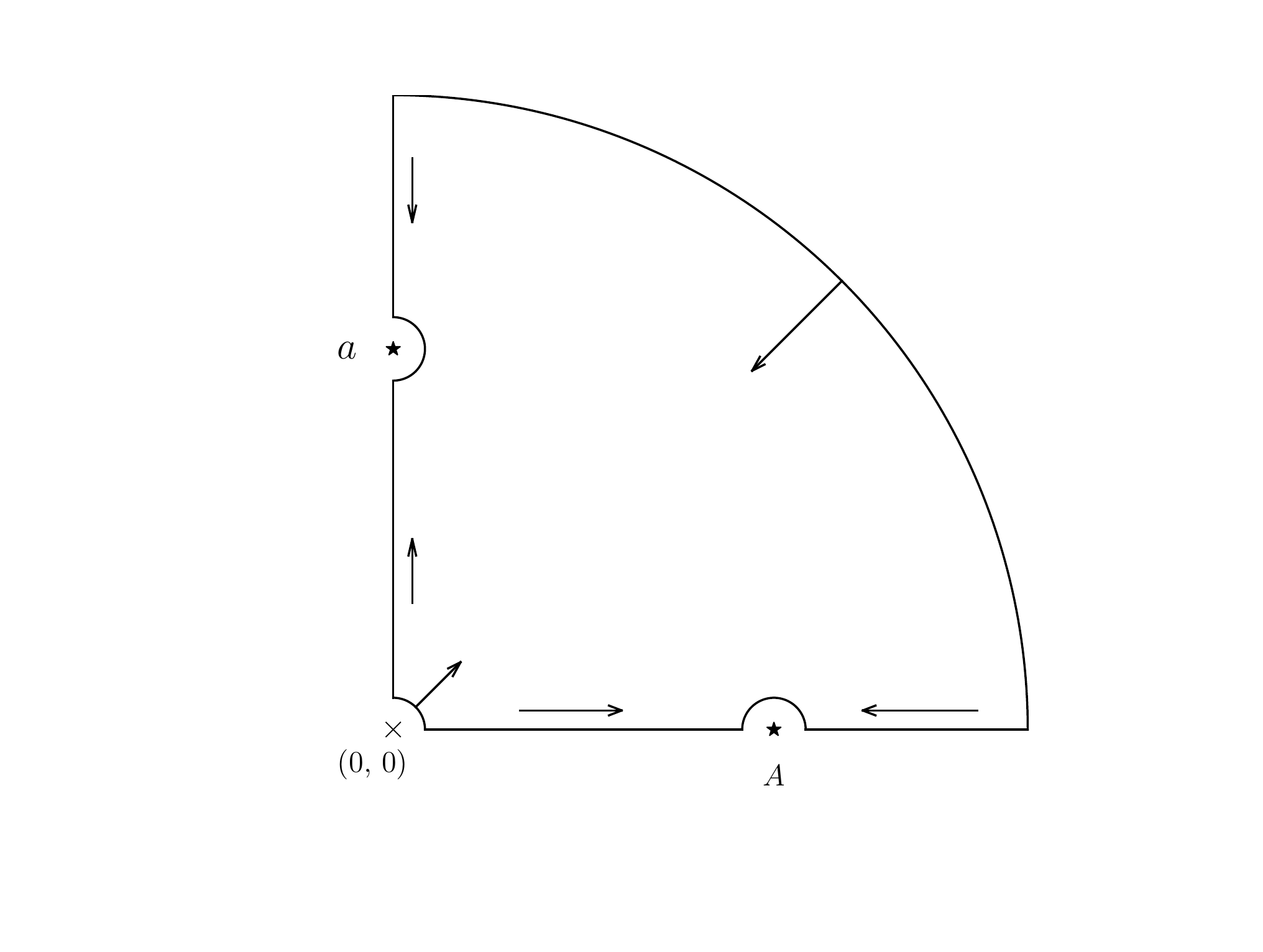}
\caption{{\small\textit{Circuit used to compute the Poincar\'e index and determine the nature of fixed points inside the positive quadrant.}}}
\end{figure}
\noindent The arrows represent the directions of the vector field along the different parts of the circuit.  It can be shown in all cases that for a radius large enough, the large arc contributes $1/4$ to the index.

\begin{prop}
\label{fp}
Assume all  fixed points are hyperbolic. The only possibilities are as follows:

- if $(\overline n^{x},0)$ and $ (0,\overline n^{y})$ are unstable points, the index of the circuit is $1$ and there is either one stable point inside the domain or
$3$ fixed points: $2$ stable nodes and one saddle point.

- if $(\overline n^{x},0)$ and $ (0,\overline n^{y})$ are stable points, the index is -1 and there is  either one saddle point inside  or
$3$ fixed points: $2$ saddle points and one stable point.

- if one of the points  $(\overline n^{x},0)$ or $ (0,\overline n^{y})$ is an unstable node and the other one a saddle point, then the index is $0$ and we have either $0$ fixed point or two fixed points: one saddle point and one stable point.
\end{prop}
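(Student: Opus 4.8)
The plan is to evaluate the Poincar\'e index of the circuit $\Gamma$ of Figure~\ref{fig:2} in two independent ways --- once as the sum of the indices of the equilibria it encloses, once by directly tracking the rotation of the field $(P,Q)$ along $\Gamma$ --- and then to confront the common value with the bound on the number of equilibria interior to the positive quadrant given by the previous proposition.

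First I would set up the bookkeeping. Under the hyperbolicity assumption, the classification recalled just before the statement shows every equilibrium in $(\mathbb{R}_{+}^{*})^{2}$ is a sink (index $+1$) or a saddle (index $-1$). At $A=(\overline n^{x},0)$ the Jacobian is triangular --- the entry $\partial_{n^{x}}Q(A)$ vanishes because $Q$ carries the factor $n^{y}$ --- with eigenvalues $-r(x)<0$ and $S(y;x)$; for the symmetric reason the Jacobian at $a=(0,\overline n^{y})$ is triangular with eigenvalues $-r(y)<0$ and $S(x;y)$. Hence, by Proposition~\ref{fixed-points} and hyperbolicity, each of $A$ and $a$ is either a stable node (index $+1$) or a saddle (index $-1$), according to the sign of its transverse eigenvalue $S(y;x)$, resp.\ $S(x;y)$. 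Realizing $\Gamma$ concretely --- follow the two axes, cut the corner at the origin by a small quarter-circle, go around $A$ and $a$ by small semicircular detours bulging \emph{into} the open positive quadrant, and take the outer radius large enough that no equilibrium lies outside $\Gamma$ --- makes $\Gamma$ a simple closed curve missing every equilibrium whose enclosed region contains exactly the equilibria of $(\mathbb{R}_{+}^{*})^{2}$. By the Poincar\'e index theorem, $\mathrm{ind}(\Gamma)=\#\{\text{sinks in }(\mathbb{R}_{+}^{*})^{2}\}-\#\{\text{saddles in }(\mathbb{R}_{+}^{*})^{2}\}$.

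Next I would compute $\mathrm{ind}(\Gamma)$ directly by adding the variation of the argument of $(P,Q)$ along the pieces of $\Gamma$. On the straight parts of the axes the field is purely horizontal on $\{n^{y}=0\}$, purely vertical on $\{n^{x}=0\}$, and nonzero there away from $(0,0),A,a$: contribution $0$. Near the origin $P=(r(x)+o(1))n^{x}$ and $Q=(r(y)+o(1))n^{y}$, so the corner quarter-circle contributes $-\tfrac14$, cancelling the $+\tfrac14$ contributed by the large arc (established above). It remains to evaluate the two detours. In local coordinates $(u,v)=(n^{x}-\overline n^{x},n^{y})$ the field near $A$ reads $\big({-r(x)}\,u+c\,v+O(|(u,v)|^{2}),\ S(y;x)\,v+O(|(u,v)|^{2})\big)$ for a constant $c$, so along the inward semicircle its second component keeps the strict sign of $S(y;x)$ while its first component runs from positive to negative; hence the argument of $(P,Q)$ turns by exactly $+\pi$ if $A$ is a saddle and by exactly $-\pi$ if $A$ is a stable node, i.e.\ the $A$-detour contributes $\tfrac12\,\sgn S(y;x)$, and symmetrically the $a$-detour contributes $\tfrac12\,\sgn S(x;y)$. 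Summing, $\mathrm{ind}(\Gamma)=\tfrac12\,\sgn S(y;x)+\tfrac12\,\sgn S(x;y)$, which is $+1$ when $A$ and $a$ are both saddles, $-1$ when both are stable nodes, and $0$ when one is a saddle and the other a stable node.

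Equating the two expressions, inside $(\mathbb{R}_{+}^{*})^{2}$ one gets $\#\{\text{sinks}\}-\#\{\text{saddles}\}$ equal to $+1$, $-1$ or $0$ in the three respective cases, while the previous proposition gives $\#\{\text{sinks}\}+\#\{\text{saddles}\}\le 3$; since both counts are nonnegative integers, the only possibilities are $(1,0)$ or $(2,1)$ in the first case, $(0,1)$ or $(1,2)$ in the second, and $(0,0)$ or $(1,1)$ in the third --- exactly the claimed list. The step needing care is the detour computation: one must verify that along each small semicircle the argument of the field makes a net rotation of \emph{exactly} $\pm\pi$ with no extra winding. This is where the hyperbolicity of $A$ and $a$ enters (the linear part dominates on a sufficiently small semicircle and keeps the field off the origin there), together with the triangular shape of the two Jacobians, which ties the sign of the rotation to that of the transverse eigenvalue, i.e.\ of the corresponding fitness.
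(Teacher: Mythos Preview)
Your proof is correct and follows the same Poincar\'e--Hopf strategy that the paper sketches; you have simply made explicit what the paper leaves to the reader, namely the piece-by-piece computation of the winding of $(P,Q)$ along the circuit and the triangular Jacobian structure at $A$ and $a$ that pins the semicircular contributions to $\tfrac12\,\sgn S(y;x)$ and $\tfrac12\,\sgn S(x;y)$. The paper only records the $+\tfrac14$ from the large arc and then invokes the index theorem; your additional bookkeeping (the $-\tfrac14$ at the origin and the detour analysis) is exactly the omitted computation, and your final enumeration via $s-d=\mathrm{ind}(\Gamma)$ together with $s+d\le 3$ reproduces the claimed list.
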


\me This proposition follows from the Poincar\'e-Hopf theorem: the index of the curve is equal to the sum of the indices of the fixed points inside the domain (see \cite[Prop.6.26, p.175]{ADL} or \cite[Prop.1.8.4, p.51]{GH}). Combining this result with  Proposition \ref{fixed-points}, one can decide between the different possibilities depending on the parameters.

\me The diagrams in Figure \ref{fig:diagram}  realize the different situations described above. However, there may exist other diagrams in accordance with Proposition \ref{fp} that we have never observed numerically. We are yet unable to prove or disprove the existence of such other diagrams. One can nevertheless show that in the case where $x$ and $y$ are sufficiently similar, the phase diagrams of Figure \ref{fig:diagram} are the only possible ones (cf. \cite{billiardcolletferrieremeleardtran}).
In the case of non hyperbolic fixed points inside the positive quadrant (with index $0$ as mentioned previously), an analogue of Proposition \ref{fp} can be established. This situation is however exceptional since it implies a nonlinear (polynomial) relation between the coefficients.

\subsection{The case of constant  competition}
\label{sec:Cconst}

\me Assume that the competition kernel is constant  $C(u,v)\equiv C$ for all $u,v\in {\cal X}$.
Eq.\eqref{eq:edo2}  gives:
\begin{align}\begin{cases}\label{eq:edo2-Cconstant}
\frac{dn}{dt}= & n\,\big(p\,r(y) + (1-p)\,r(x)  - Cn\big) \\
\frac{dp}{dt}= & p\,(1-p)\,\Big( r(y) - r(x)  + \alpha(y,x){ n\over \beta +\mu n} \Big).
\end{cases}\end{align}

\me
\noindent Let us consider separately the cases of  FD transfer rate  and  DD or BDA transfer rates.

\me
\emph{Frequency-dependent horizontal transfer rate.} With $\,\beta=0\,$ and $\,\mu=1\,$, \eqref{eq:edo2-Cconstant}
shows that there are only two equilibria for the second equation: $\,p=0\,$ or $\,p=1\,$ (Figures \ref{fig:diagram} (1)-(2)). Therefore there is no polymorphic fixed point and we get a very simple ``Invasion-implies-Fixation'' criterion:
trait $y$ will invade a resident population of trait $x$ and get fixed if and only if
\be\label{eq:FDpolym}  S(y;x)&=& f(y;x) + \alpha(y,x) = - S(x;y)>0.
\ee
Thus, compared to a system without HGT, horizontal transfer can revert the direction of selection (\emph{i.e.} $S(y;x)$ and $f(y;x)$ have opposite signs) provided that
$$|\alpha(y,x)| > |f(y;x)|\quad \mbox{ and } \quad \Sgn(\alpha(y,x)) = -\Sgn(f(y;x)).$$
This implies that HGT can drive a deleterious allele to fixation.

\bi \emph{ Density-dependent or BDA horizontal transfer rate.} When $\beta\not=0$, there exists a polymorphic fixed point when
\be
\label{eq:DDpolym}
0<\widehat{p}=-\frac{f(y;x)(\beta C+ \mu r(x)) +\alpha(y,x) r(x) }{\mu f(y;x)^2+ \alpha(y,x)\,f(y;x)}<1.
\ee
If $f(y;x)$ and $\alpha(y,x)$ are both positive, the above expression is negative and there is fixation of $y$. If $f(y;x)$ and $\alpha(y,x)$ are both negative, $\widehat{p}<1 \Longleftrightarrow -f(y;x) \beta C<r(y)(\mu f(y;x)+\alpha(y,x))$ which never happens since the left hand side is positive and the right hand side is negative. So there is fixation of $x$ in this case.
When $f(y;x)$ and $\alpha(y,x)$ have opposite signs, there may exist a non-trivial fixed point
which is stable if
\be
\label{trade}\mu f(y;x)+\alpha(y,x)>0.\ee

\me In contrast to the classical Lotka-Volterra competition model in which constant competition prevents stable coexistence, HGT with DD or BDA transfer rates allows the maintenance of a deleterious trait ($f(y;x) < 0$) in a stable polymorphic state; this requires that the flux rate $\,\alpha(y,x)\,$ be positive and large enough in favor of $y$ to $x$.

 \section{Rare mutation probability in the evolutionary time-scale}
\label{evolution}

As  seen in Section 3, it is not possible to capture the effect of rare mutations ($p_K\rightarrow 0$) at the ecological time scale. We have to consider  a much longer time scale to observe  this effect. The mutation time scale is of order  ${1\over K \,p_K}$ and we  will assume in the following that  when $K$ is large enough,
 \begin{equation}
\forall\, V>0,\quad  \log K\, \ll\, {1\over K\,p_K} \,\ll\, e^{VK}.
\label{pk:TSS}
\end{equation}
A separation of time scales between competition phases and mutation arrivals results from this assumption.
 Indeed, mutations being   rare enough, the  selection will have  time to eliminate deleterious traits or to fix advantageous  traits before the arrival of a new mutant.

 \me Let us now give a rigorous approach of the mechanism governing the successive invasions of successful mutants.

 \subsection{Probability and time of invasion and fixation under competition with horizontal transfer}\label{invasion}

As an intermediate step,  we investigate the fate of a newly mutated individual with trait $y$ in a resident population
 in which trait $x$ is common.  We assume that the invasion fitness of trait $y$ defined in \eqref{fitness} is positive, $S(y;x)>0$. This includes both cases of an advantageous trait ($f(y;x)>0$), or a deleterious trait ($f(y;x)<0$) provided that the horizontal transfer  rate from $y$ to $x$ is high enough. Figure \ref{fig:fix} gives illustration of the determining effect of the transfer in the evolution. It  shows the different stochastic dynamics one can obtain under frequency or density-dependent HGT in the simple case of unilateral transfer and that could not be realized without transfer. Figure \ref{fig:fix} shows that a deleterious trait can invade a resident population and go to fixation (Fig.\ref{fig:fix}(b) and (c)), or can stably coexist with the resident one (Fig.\ref{fig:fix}(a) and (d)). Fig.\ref{fig:fix}(a) especially shows that both traits stably coexist even though competition is constant, which is made possible by density-dependent HGT (we have recalled in Subsection \ref{sec:LV} that it cannot occur for a usual Lotka-Volterra system).

 \begin{figure}[!ht]
 \label{fig:fix}
 \begin{center}
\begin{tabular}{cc}
\includegraphics[height=5cm,trim =15mm 60mm 25mm 80mm, clip, scale=0.5]{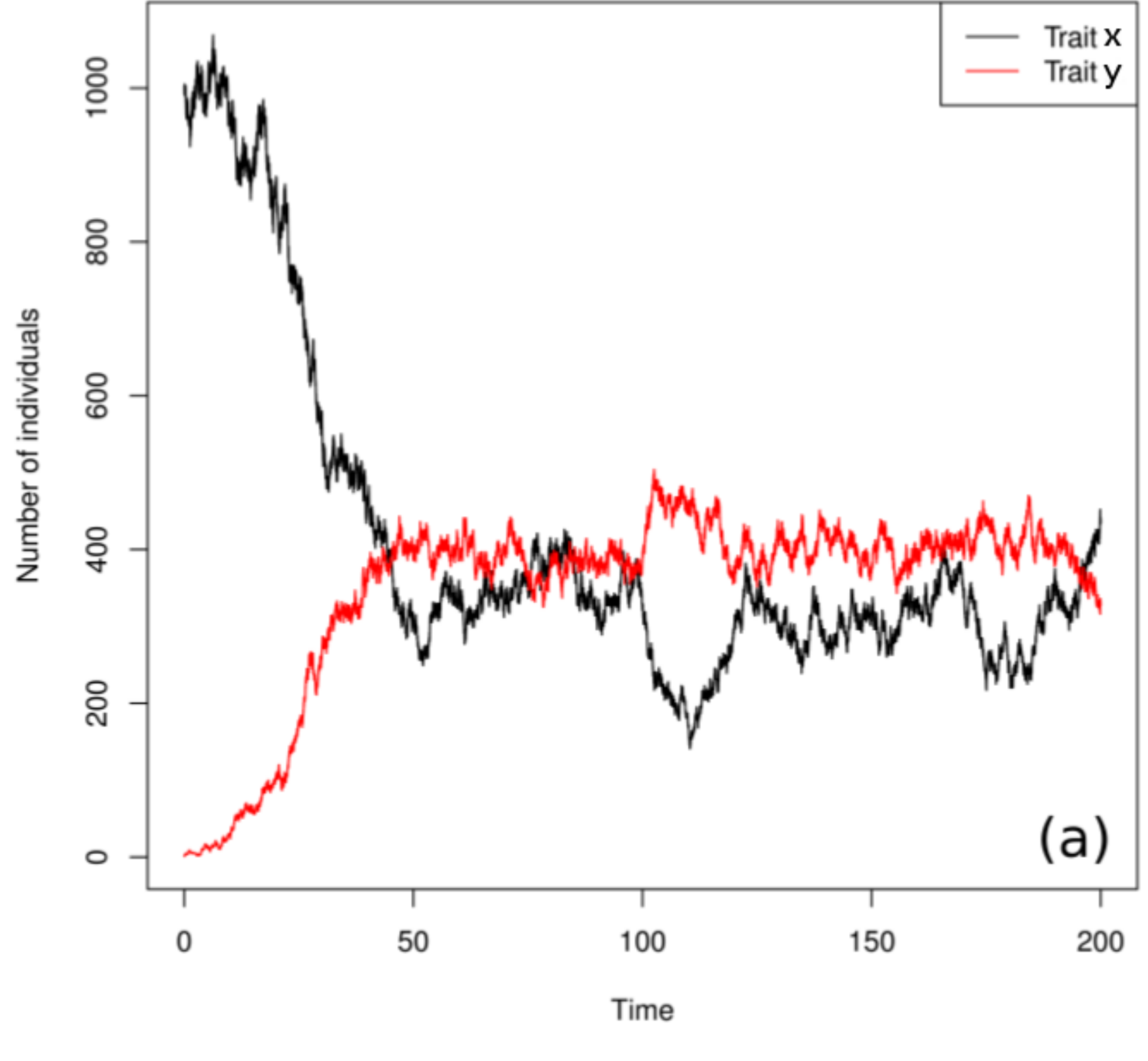} &
\includegraphics[height=5cm,trim =15mm 60mm 25mm 80mm, clip, scale=0.5]{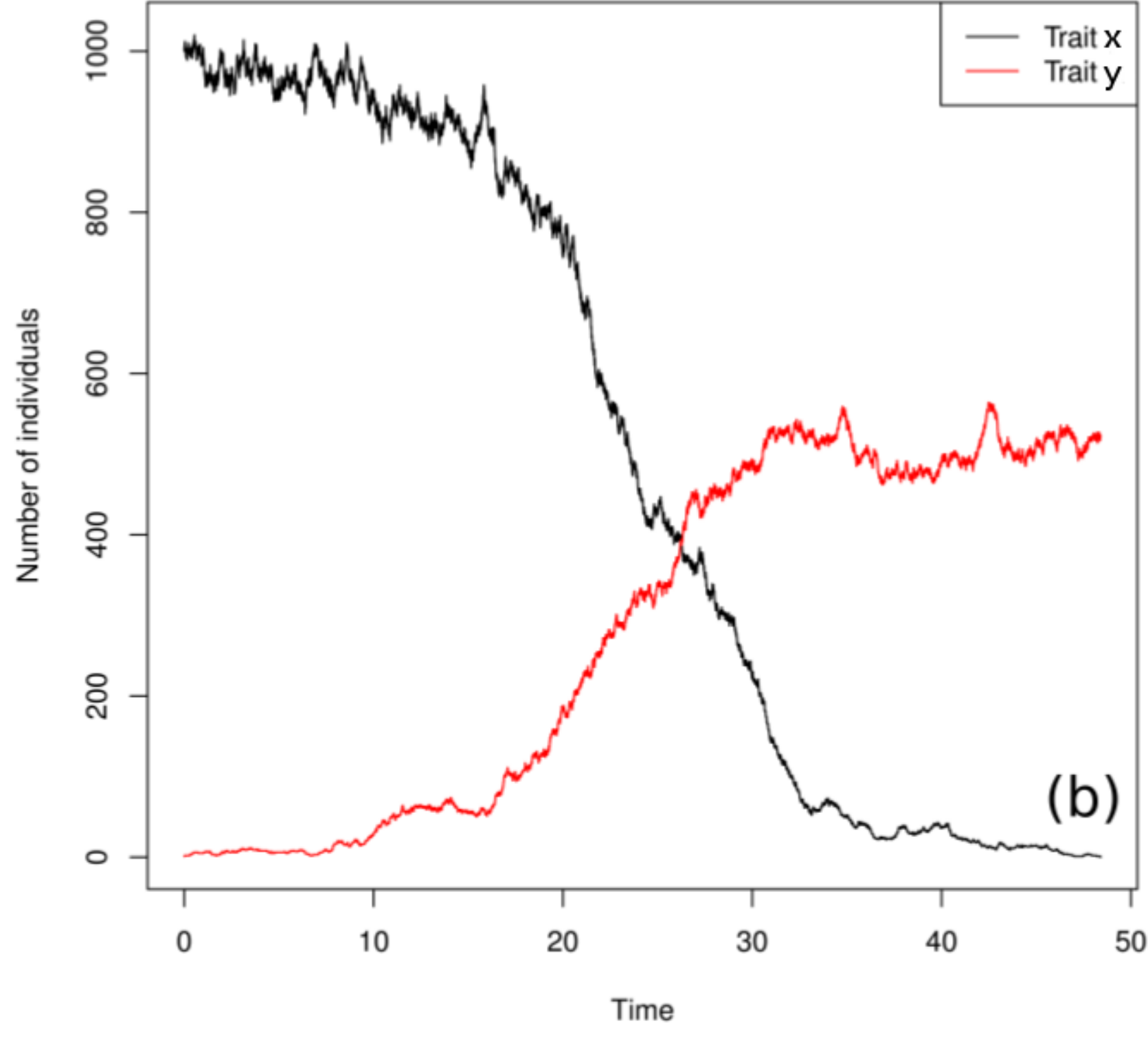} \\
\includegraphics[height=5cm,trim =15mm 60mm 25mm 80mm, clip, scale=0.5]{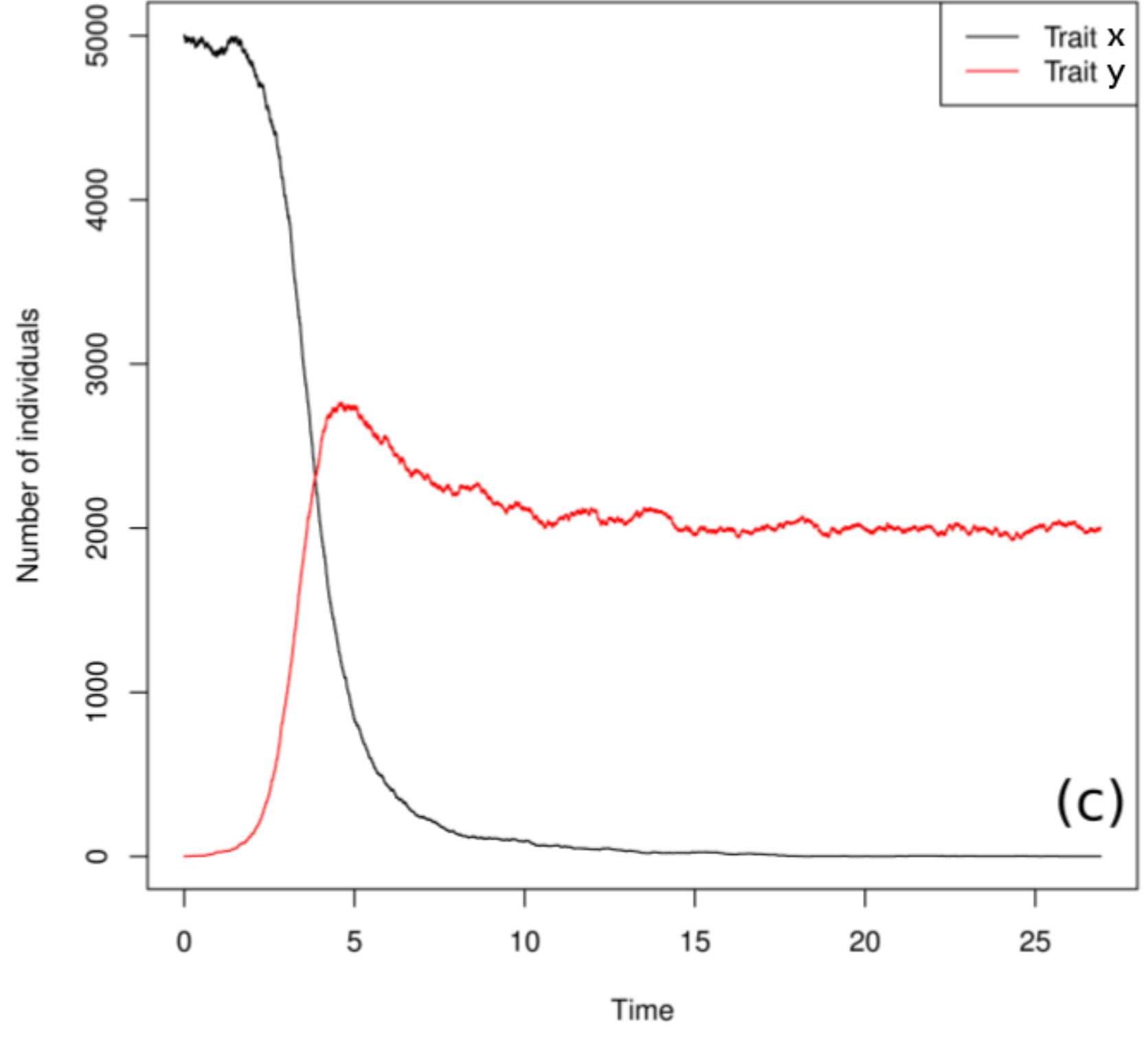} &
\includegraphics[height=5cm,trim =15mm 60mm 25mm 80mm, clip, scale=0.5]{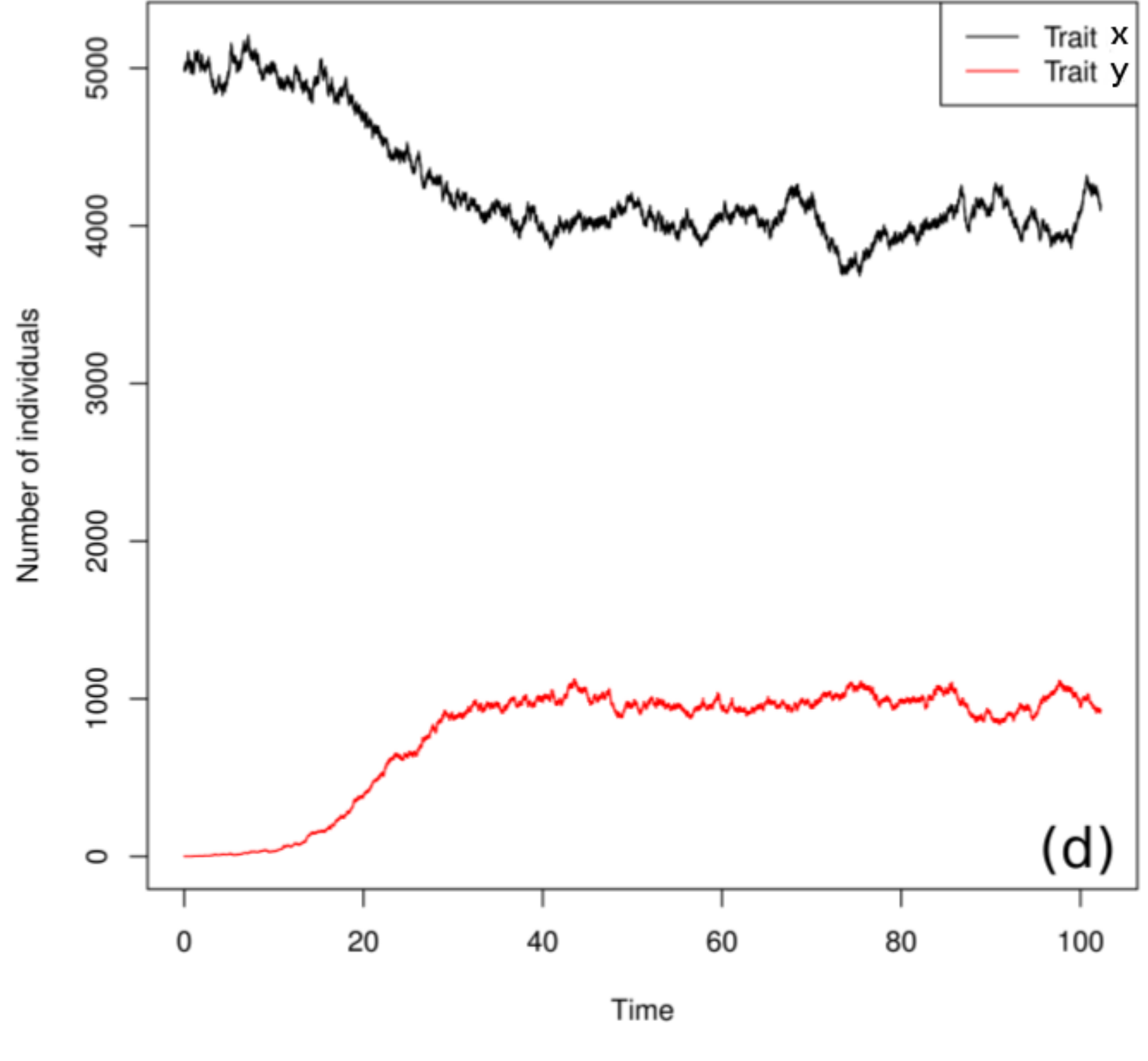} \\
\end{tabular}
\end{center}
\caption{{\small \textit{Invasion and fixation or polymorphic persistence of a deleterious mutation with density-dependent (left, (a) and (c), $\mu=0$, $\beta=1$) or frequency-dependent (right, (b) and (d), $\mu=1$, $\beta=0$,) unilateral HGT rates. The deleterious nature of the mutation means that its invasion fitness without HGT is negative. Other parameters: Top figures (a) and (b): constant competition coefficients $C(y,x)=C(x,y)=C(x,x)=C(y,y)=1$, $b(y)=0.5$, $b(x)=1$, $d(x)=d(y)=0$ $K=1000$, $\alpha =0.7$; Bottom figures (c) and (d): $C(y,x)=C(x,x)=2$, $C(y,y)=4$, $C(x,y)=1$, $b(y)=0.8$, $b(x)=1$,  $d(x)=d(y)=0$, $K=10000$,  $\alpha =5$ under density-dependent rate, $\alpha=0.5$ under frequency-dependent rate.}}}
\end{figure}

\me Consider an individual with trait $y$ introduced in a resident population of individuals with trait $x$, whose size $N^{x,K}$ is very close to the equilibrium  $K \overline n^x$.  During the first phase, $N^{y,K}$ is very small with respect to $N^{x,K}$. It  can be approximated by a linear birth and death branching stochastic process, at least until it reaches the threshold $\eta\, K$, for a given $\eta>0$.
In this birth and death process, the transfer $x \to y$ acts as a birth term and the transfer $y\to x$ as a death term.
When $K$ tends to infinity, the  probability for the process  $N^{y,K}$ to reach $\eta K$ is approximatively  the survival probability of the process  (e.g. \cite{champagnat06,champagnatferrieremeleard}) and is given by
\begin{equation}
P(y;x)=\frac{S(y;x)}{b(y)+ h(y,x,\overline n^{x} \delta_x)\,\overline n^{x}}= \displaystyle{\frac{b(y)- d(y) + \big( \frac{\alpha(y,x)}{\beta+\mu \,\overline n^{x}} -  C(y,x)\big)
\,\overline n^{x}}{b(y)+ \frac{\tau(y,x) \,\overline n^{x}}{\beta+\mu\, \overline n^{x}}}}.\label{eq:proba_invasion}
\end{equation}



\me
\subsection{Times of invasion and fixation}\label{fixation}

\me We refer here to \cite{champagnat06}, where  the results are rigorously proved.
As the selectively advantageous trait $y$ increases from rare, the first phase of the $y$-population growth has a duration of order $\log K/S(y;x)$. If $N^{y,K}$ reaches the threshold $\eta K$, then the second phase begins, where the processes $(N^{x,K},N^{y,K})$ stay close to the dynamical system \eqref{sysdyn}.  The deterministic trajectory, which has a duration of order 1,  can reach one of two final states: either both types of individuals stably coexist, or individuals with trait $y$ invade the population and the $x$-population density reaches the threshold $\eta$ (i.e. $N^{x,K}_t< \eta K$). Should the latter happens, the third phase begins and $N^{x,K}$ can be approximated by a subcritical linear birth and death branching process, until $y$ is fixed and $x$ is lost.
In this birth and death process, the transfer $y \to x$ acts as a birth term and the transfer $x\to y$ as a death term.
The third phase has an expected duration $\E_{\eta K} \left[ T_0 \right]$ given by   (see \cite[Section 5.5.3, p.190]{Meleardbook})
\ben
\E_{\eta \,K} \left[ T_0 \right]=\frac{1}{b} \sum_{j\geq 1} \Big( \frac{b}{d}\Big)^j \ \sum_{k=1}^{\eta K -1}\frac{1}{k+j},
\een
where
$\,
b=  \ b(x)+\frac{\tau(x,y)r(y)}{\beta C(y,y)+\mu r(y)}\ ,\ d=  \ d(x)+\frac{C(x,y)r(y)}{C(y,y)}+\frac{\tau(y,x)r(y)}{\beta C(y,y)+\mu r(y)}.
$

\me When $K \rightarrow \infty$, $\,\E_{\eta K} \left[ T_0 \right] \simeq \log K/(d-b)$, which means that the third phase is of order $\log K/|S(x;y)|$ in duration. Summing up, the fixation time of an initially rare trait $y$ going to fixation is of order
\be \label{eq:Tfix}
T_{fix}=\log K \Big({1 \over S(y;x)}+ {1\over {|S(x;y)}|}\Big) +O(1), 
\ee
where the expressions for $S(y;x)$ and $S(x;y)$ are given in \eqref{fitness} and $O(1)$ is a negligible  term.\\

\subsection{The Trait Substitution Sequence}\label{Sec:TSS}

 The   limiting     population process at the mutation time scale ${t\over K p_K}$ will describe the evolutionary dynamics of invasions of successful mutants.

\me Let us assume in what follows that  the ecological  coefficients impede the  coexistence of two
 traits. This is known as the {\it Invasion Implies fixation} (IIF) assumption.

\me {\bf  Assumption \emph{(IIF)}}:
 \emph{Given any $x\in {\cal X}$ and  Lebesgue almost any $y\in {\cal X}$:   either  $(\overline n^{x},0)$ is a stable steady state
of \eqref{sysdyn}, or we have that $(\overline n^{x},0)$ and  $(0,\overline n^{y})$ are respectively unstable and stable steady states, and that any solution of \eqref{sysdyn} with initial state in $(\mathbb{R}_{+}^*)^2$ converges
to $(0,\overline n^{y})$ when $t\to \infty$.
}

 \bi From Section \ref{section:ODE2} we know that invasion does not necessarily imply fixation, even when the invasion fitnesses of the two types have opposite signs, as shown by Fig. \ref{fig:diagram} (5) and (6). In these cases, fixation depends on initial conditions and is usually not achieved when the invading type starts from a small density. Considering the special case of constant competition, however, invasion does imply fixation (cf. Section \ref{sec:Cconst}) if HGT rates are FD or when condition \eqref{eq:DDpolym}
 is not satisfied if HGT rates are DD or BDA.

 \bi Assumption \eqref{pk:TSS} together with Assumption \emph{(IIF)} imply that for a monomorphic ancestral
population, the dynamics at the time scale $t/(Kp_K)$ can be approximated by  a jump process over  singleton measures on ${\cal X}$ whose mass at any time is at equilibrium. More precisely, we have

\begin{thm}
\label{TSS}
We work under Assumptions $(H)$, \eqref{pk:TSS} and (IIF).
The initial conditions are
   $\nu^K_{0}(dx)= N^K_{0}\,\delta_{x_{0}}(dx)$ with $x_0\in  {\cal X}$, $\ \lim_{K\to \infty} N^K_{0} = \overline n^{x_{0}}$ and $\,\sup_{K\in \N^*} \E((N^K_0)^3)<+\infty$.

\me
Then, the sequence of processes $\ (\nu^K_{./(Kp_K)})_{K\ge 1}$ converges in law to the $M_{F}( {\cal X})$-valued process  $\,(V_t(dx)=  \overline{n}^{Y_{t}}\,\delta_{Y_{t}}(dx), t\geq 0)$
where the process $(Y_{t})_{t\geq 0}$ is a pure jump process on $ {\cal X}$, started at $x_0$, which  jumps from $x$ to $y$ with the jump measure
  \begin{equation}
    \label{taux}
    b(x) \,\overline n^{x}\,{[P(y;x)]_+}
    \,m(x,dy),
  \end{equation}
  where $P(y;x)$ has been defined in \eqref{eq:proba_invasion}.

  \noindent
The convergence holds in the sense of finite dimensional distributions on $M_{F}( {\cal X})$ and in the sense of occupation measures in $M_F( {\cal X} \times [0,T])$ for every $T>0$.\hfill $\Box$
\end{thm}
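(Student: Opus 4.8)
\medskip\noindent
The plan is to follow the three-phase picture of a mutant invasion introduced by Champagnat \cite{champagnat06} (see also \cite{champagnatferrieremeleard}), adapting every estimate to the presence of horizontal transfer. The driving mechanism is the time-scale separation encoded in \eqref{pk:TSS}: on the original time scale a mutant is born after a delay of order $1/(Kp_K)$, whereas the fate of that mutant (extinction, or invasion followed by fixation) is settled within a time of order $\log K$, which by \eqref{pk:TSS} is negligible once time is rescaled by $Kp_K$. Hence, after this rescaling, the population is monomorphic and sitting at its ecological equilibrium at all but a vanishing proportion of times, and it performs essentially instantaneous jumps between such monomorphic states. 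It therefore suffices to describe one invasion cycle and to chain such cycles.

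\medskip\noindent
\emph{A priori bounds and one invasion cycle.} Theorem \ref{martingales} with $f\equiv 1$, together with the fact that (Assumptions $(H)$) $r$ is bounded above and $C$ bounded below by positive constants, lets one control $\langle\nu^K_t,1\rangle$ above and below by one-dimensional logistic birth-and-death processes, giving bounds on $\mathbb{E}\big[\sup_{t\le T/(Kp_K)}\langle\nu^K_t,1\rangle^{3}\big]$ uniform in $K$; moreover, exponential (Bernstein-type) martingale inequalities show that, started within $\epsilon K$ of $K\overline n^{x}\delta_x$, the count $N^{x,K}$ stays within $\epsilon K$ of $K\overline n^{x}$ throughout any interval of length $O(\log K)$ with probability $1-o(1)$, uniformly in $x$. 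Now assume the state is within $\epsilon K$ of $K\overline n^{x}\delta_x$. Before a mutant is born the population stays near $K\overline n^{x}\delta_x$ and the mutation clock rings at total rate $\approx p_K\,b(x)\,K\overline n^{x}$, so after rescaling time by $Kp_K$ the waiting time converges to an exponential variable of parameter $b(x)\overline n^{x}$ and the mutant trait is drawn from $m(x,dy)$. Given that a mutant of trait $y$ has appeared, as long as $N^{y,K}\le\eta K$ the residents keep $\langle\nu^K_s,1\rangle$ within $(1+O(\eta)+o(1))\,\overline n^{x}$, so $Kh_K(\cdot,\cdot,\nu^K_s)$ is uniformly close to $\tau(\cdot,\cdot)/(\beta+\mu\overline n^{x})$ and $N^{y,K}$ can be sandwiched between two linear birth-and-death processes whose per-capita birth and death rates converge, as $\epsilon,\eta\to0$, to those of the branching process underlying the survival probability \eqref{eq:proba_invasion}. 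Consequently $N^{y,K}$ hits $0$ with probability $\to 1-[P(y;x)]_{+}$ and hits $\eta K$ with probability $\to[P(y;x)]_{+}$, in both cases within time $O(\log K)$. On the event that $\eta K$ is reached — which has non-vanishing limiting probability only when $S(y;x)>0$, in which case $(\overline n^{x},0)$ is transversally unstable (Proposition \ref{fixed-points}) hence unstable, so that Assumption $(IIF)$ must select its second alternative — the rescaled pair $(N^{x,K}/K,N^{y,K}/K)$ shadows the system \eqref{sysdyn} over an $O(1)$ time interval (Proposition \ref{largepopulationdiploid}) and enters any prescribed neighbourhood of $(0,\overline n^{y})$, so that $N^{x,K}$ falls below $\eta K$; from then on $N^{x,K}$ is dominated by a subcritical birth-and-death process (per-capita growth rate close to $S(x;y)<0$) and reaches $0$ within a further time $O(\log K)$, leaving the population monomorphic at $y$ with size close to $K\overline n^{y}$. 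The whole cycle lasts $O(\log K)$, so by \eqref{pk:TSS} the probability of a second mutation during it is $O\big((Kp_K)\log K\big)=o(1)$; and all the exceptional events above (excessive deviation of the resident, the ``wrong'' outcome of the branching comparison, a rebound of $x$ in the last phase) have probability $o(1)$, uniformly in $x,y\in\X$ by compactness and continuity of the coefficients.

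\medskip\noindent
\emph{Passage to the limit.} The number of mutations arising up to rescaled time $T$ is tight (they arrive at rate $O(1)$ per unit of rescaled time), so the cycles above can be chained: between two consecutive successful mutations the state stays, with probability $1-o(1)$, within $\epsilon K$ of the current monomorphic equilibrium, and the next successful mutant is produced with the correct law. Since a mutant $y$ triggers a jump with probability $[P(y;x)]_{+}$, the effective jump rate out of $x$ is $b(x)\overline n^{x}\int_{\X}[P(y;x)]_{+}\,m(x,dy)$ and, conditionally on a jump, the new trait has law proportional to $[P(y;x)]_{+}\,m(x,dy)$; that is precisely the jump measure \eqref{taux}. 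Comparing $\nu^K_{\cdot/(Kp_K)}$ with suitably time-accelerated and time-slowed copies of the jump process $t\mapsto\overline n^{Y_t}\delta_{Y_t}$ then gives convergence of the finite-dimensional distributions on $M_F(\X)$; since the time spent in non-monomorphic configurations is $o(1)$, the occupation measures on $\X\times[0,T]$ converge as well. One cannot expect convergence in $\D([0,T],M_F(\X))$: during an invasion/fixation episode the trait marginal moves on the intermediate scale $\log K/(Kp_K)$ with non-negligible fluctuations, so the rescaled process fails to be $C$-tight in the Skorokhod sense — which is why the statement is confined to finite-dimensional and occupation-measure convergence. (The dichotomy in $(IIF)$ holds for Lebesgue-almost every $y$ and is used here for $m(x,\cdot)$-almost every $y$, which is legitimate as soon as $m(x,\cdot)$ does not charge the exceptional Lebesgue-null set.)

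\medskip\noindent
\emph{Main difficulty.} The technical core is the first phase: bounding $N^{y,K}$ by linear birth-and-death processes while the transfer rates depend on the fluctuating total size $\langle\nu^K_s,1\rangle$, and recovering precisely the survival probability \eqref{eq:proba_invasion} in the joint limit $K\to\infty$, $\eta\to0$. This forces one to show that the resident population does not drift away from $K\overline n^{x}$ by more than $\epsilon K$ over times of order $\log K$ — a horizon that grows with $K$, so that second-moment estimates are insufficient and exponential deviation inequalities are required — and to make all these controls uniform in $y\in\X$. A closely related subtlety is the last phase: one must rule out a rebound of the resident above $\eta K$ once it has become subcritical, which again rests on taking $\eta$ small and on uniform exponential estimates.
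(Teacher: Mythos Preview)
Your proposal follows essentially the same three-phase Champagnat scheme as the paper's proof, and the overall architecture --- waiting for a mutant, branching approximation of the invader, deterministic phase via Proposition~\ref{largepopulationdiploid}, subcritical extinction of the former resident, then chaining cycles --- is correct and matches the paper closely.

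There is, however, one point where your justification falls short of what is needed. You control the resident near $K\overline n^{x}$ only over intervals of length $O(\log K)$ via Bernstein-type inequalities, and then assert that ``before a mutant is born the population stays near $K\overline n^{x}\delta_{x}$''. But the inter-mutation waiting time is of order $1/(Kp_K)$, which by \eqref{pk:TSS} satisfies $\log K \ll 1/(Kp_K)$, so an $O(\log K)$ control does not suffice. The paper handles this by invoking the Freidlin--Wentzell large deviations principle for the monomorphic density process: the exit time from an $\eta$-neighbourhood of $\overline n^{x}$ is, with high probability, at least $e^{VK}$ for some $V>0$, and the \emph{right-hand} inequality in \eqref{pk:TSS}, $1/(Kp_K)\ll e^{VK}$, is precisely what guarantees that the first mutation occurs before this exit. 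Your Bernstein estimates are the right tool for the $O(\log K)$ invasion/fixation phases, but for the long monomorphic plateau you need the exponential-in-$K$ exit-time bound; otherwise the upper constraint in \eqref{pk:TSS} is never used in your argument. Once you add this large-deviation step, your proof and the paper's coincide.
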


\me
 The jump process
  $\ (Y_t,t\geq 0)$ (with $Y_0=x_0$)
describes the support of $(V_t, t\geq 0)$.  It  has been heuristically introduced in    \cite{metzgeritzmeszenajacobsheerwaarden} and  rigorously studied in \cite{champagnat06}, in the case without transfer. It is often called Trait Substitution Sequence (TSS).

\bi
\begin{rem} Let us remark that the transfer events can change the direction of evolution.
 For example, let us consider the size model with trait $x\in[0,4]$, where $\beta=0$, $\mu = 1$, $C$ is constant and
\be \label{exple} b(x) = 4-x\,,\, d(x) = 0\,,\, \tau(x,y) =e^{x-y}.\ee
Then if $h>0$,
$$S(x+h;x) = r(x+h) - r(x) + \tau(x+h,x) -\tau(x,x+h)  = - h + e^h - e^{-h},$$
which is positive if and only if $h>0$.
Thus the evolution with transfer is directed towards larger and larger traits.

\me On the other hand, without transfer, the invasion fitness $f(x+h;x)$  is negative for $h>0$ and a mutant of trait $x+h$ with $h>0$ would not appear in the TSS asymptotics. Therefore, adding the transfer drastically changes the situation.
\end{rem}

\bi \begin{proof}[Proof of Theorem \ref{TSS}]The proof is a direct adaptation of  \cite{champagnat06}. Accounting for the transfer parameters, the birth and death rates, respectively  of the resident $x$ and mutant $y$, become
$$b(x)+{\tau(x,y) N^{y,K}\over  \beta + \mu N^K}\quad,\quad d(x)+C(x,x) N^{x,K}+ C(x,y) N^{y,K}+{\tau(y,x) N^{y,K}\over \beta + \mu N^K}\,;$$
 $$b(y)+{\tau(y,x) N^{x,K}\over \beta + \mu N^K}\quad , \quad d(y)+C(y,x) N^{x,K} + C(y,y) N^{y,K}+{\tau(x,y) N^{x,K}\over  \beta + \mu N^K}.$$

\me The main idea is as follows. If mutations are rare, the selection has time to eliminate the deleterious traits or to fix
advantageous traits before a new mutant arrives. We can then combine the results obtained in Sections \ref{section:ODE2}, \ref{invasion} and \ref{fixation}.

\noindent  Let us fix $\eta>0$. At $t=0$, the population is monomorphic with trait $x_{0}$ and satisfies the assumptions of Theorem \ref{TSS}. As long as no mutation occurs, the population stays monomorphic with trait $x_{0}$ and for $t$ and $K$ large enough, the density process $\,\langle\nu^K_t, {\bf 1}_{x_{0}}\rangle\,$
belongs to the $\eta$-neighborhood of $\overline{n}^{x_{0}}$ with large probability (cf. Prop. \ref{largepopulationdiploid}).  We know by the large deviations  principle (see for example  Freidlin-Wentzell \cite{FW84} or Feng-Kurtz~\cite{FK}) that the time taken by the density process to leave the
$\eta$-neighborhood of $\overline{n}^{x_{0}}$ is larger than $\exp(VK)$, for some $V>0$, with high probability. Hence if Assumption  \eqref{pk:TSS} is satisfied, the first mutant will
appear with large probability before the population process exits the $\eta$--neighborhood of $\overline{n}^{x_{0}}$. Therefore,  the approximation of the population process by $\overline{n}^{x_{0}} \delta_{x_{0}}$ stays valid until the first mutation occurence.

\noindent  The invasion dynamics of the mutant with trait $y$ in this resident population has been  explained  in Subsections  \ref{invasion} and \ref{fixation}.   At the beginning,  since the density of the resident population is
close to $\overline{n}^{x_{0}}$, the mutant dynamics is close to a linear birth and death process whose rates depend on $\overline n^{x_{0}}$.
If  $S(y;x_{0})>0$, the birth and death process is supercritical, and therefore, for large $K$, the probability that the mutant population's density attains $\eta$ is close to the survival probability $P(y;x_{0})$.
After this threshold, the density process $(\langle\nu^K_t, {\bf 1}_{x_{0}}\rangle, \langle\nu^K_t, {\bf 1}_{y}\rangle )$ can be approximated, when $K$ tends to infinity, by  the
solution of the dynamical system   \eqref{sysdyn}. It will attain with large probability an
$\eta$-neighborhood of the unique globally asymptotically stable equilibrium $\,n^*\,$ of~\eqref{sysdyn}  and will stabilize  around this equilibrium. Using Assumption \emph{(IIF)},  we know that the equilibrium is  $(0,\overline{n}^{y})$.
It is also shown in Subsection \ref{fixation} that if the initial
population is of order $K$,  then the time between the occurence of the mutant and the final stabilization  is given by \eqref{eq:Tfix}. Hence, if $\ \log K \ll {1\over K p_K}$, with a large probability
 this phase of
competition-stabilization will happen  before the occurrence of the next mutation. Using Markovian arguments we can reiterate the reasoning after every mutation event. If the process belongs to a $\eta$-neighborhood of $\overline{n}^x$, the mutation rate from an individual with
trait $x$ is close to $ p_K  b(x) K \overline{n}^x$. Hence, at the time scale ${t\over K p_K}$, it is approximatively $b(x) \overline{n}^x$.
 Therefore, if we consider the population process at time $t/K p_{K}$ and make $K$ tend to infinity, we only keep in the limit the successive stationary states corresponding to
successive advantageous  mutations. The limiting process is a pure jump process $(V_t, t\geq 0)$ such that the jump rate from a state  $
\overline{n}^x \delta_{x}$ to a state $\overline{n}^y \delta_{y}$ is $b(x) \overline{n}^x\,[P(y;x)]_+$. The mutant trait $y$ is chosen following $m(x,dy)$.  This explains Formula~\eqref{taux}.
\end{proof}

\section{Canonical equation of the adaptive dynamics }
\label{sec:7}

\me The impact of transfer on evolution can also be captured and highlighted  with the canonical equation. The canonical equation, first introduced by Dieckmann Law \cite{dieckmannlaw} (see also \cite{champagnatferrieremeleard}) is the limit of the TSS when we accelerate further time and consider small mutation steps.

\me
Let us now assume that the mutations are very small in the sense that the mutation distribution $m_{\varepsilon}$ depends on a parameter $\varepsilon>0$ as follows:
$$\int g(z) m_{\varepsilon}(x,dz) =  \int g(x+\varepsilon h) m(x,dh),$$
where $m$ is a reference symmetric measure.
Then the generator of the TSS  $Y^\varepsilon$ (which now depends on the parameter $\varepsilon$),  is given by
$$L^\varepsilon g(x) = \int (g(x+\varepsilon h) - g(x))\,  b(x)\, \overline n^{x}\,\frac{[S(x+\varepsilon h;x)]_+}
    {b(x+\varepsilon h)+\tau(x+\varepsilon h,x)\overline{n}^x}\,m(x,dh).$$

\me    If we assume that $x \mapsto \tau(x,y)$ and $x\mapsto b(x)$ are continuous and since $f(x;x)=\tau(x,x)=0$, then when $\varepsilon$ tends to $0$,  $L^\varepsilon g(x) $ converges to
$$ {1\over 2} g'(x) \, \overline n^{x}\,\partial_{1}S(x;x)
  \int h^2 \,m(x,dh).$$
Then standard tightness and identification arguments allow us to show the convergence in law in the Skorohod space   $\mathbb{D}([0,T], {\cal X})$ of the process ${1\over \varepsilon^2} Y^\varepsilon$ to the deterministic equation
\be
\label{eq:canonique}
x'(t) &=& {1\over 2}\,   \bar n^{x(t)}\,\partial_{1}S(x(t);x(t)) \int h^2\, m(x(t),dh),
\ee
 the so-called
{\it canonical equation of adaptive dynamics} introduced in \cite{dieckmannlaw}.

\me When the mutation law  $m$ is not symmetric, \eqref{eq:canonique} involves  the whole measure $m$, instead of  its variance.

\bi Let us come back to the example \eqref{exple} introduced previously. In this case, the canonical equation is given by
$$x'(t)  = \, {4-x(t)\over C}   \int h^2 \,m(x(t),dh),$$
since $r'(x) = -1$ and $\partial_{1}\tau(x,x) = - \partial_{2}\tau(x,x) = 1$.
Then the trait support is an increasing function. That means that the evolution  with transfer decreases  the reproduction rate until it vanishes,  and therefore  drives the population to an evolutionary suicide.
Let us remark that without transfer, the canonical equation would be
$$x'(t)  = \,- {4-x(t)\over C}   \int h^2\, m(x(t),dh),$$
and would drive to the optimal nul trait which maximizes the birth rate.

\bi Then we observe that transfer can drastically change the direction of evolution, leading in the worst cases to an evolutionary suicide.
Such situation will be observed on the numerical simulations of the next section.

\section{Simulations - Case of Frequency-Dependence}

 \emph{(With the help of the master students Lucie Desfontaines and St\'ephane Krystal)}.

 \bi
 In this section, we focus on the special case of unilateral transfer, which is relevant to address the question of fixation of mobile genetic elements such as plasmids. Plasmid transfer is unilateral: individuals containing a specific plasmid can transmit one copy to another individual which does not carry this plasmid. Let us assume that trait $y$ indicates that the individual carries the plasmid of interest; individuals with trait $x$ are devoid of this plasmid. Unilateral transfer then means $\tau(y,x)>0$ and $\tau(x,y)=0$, hence $\alpha(y,x)=\tau(y,x)$.


\bi
 The next simulations will be concerned with the particular case of  frequency-dependent  unilateral HGT model  with $x\in [0,4]$, $\,m(x,h)dh ={\cal N}(0,\sigma^2)$,  $\displaystyle{\tau(x, y, \nu) = {\tau\,{\bf 1}_{x>y}\over \langle \nu,1\rangle}}$.

\me   $b(x) = 4-x\ ;\ d(x) = 1\ ;\ C=0,5\ ;\ p=0,03\ ; \ \sigma=0,1\ ;\ K=1000$.

\me  Initial state: $\, 1000\,$ individuals with trait $\,1$. Equilibrium of population size with trait $\,1$:  $1000 \times {b(1)-d(1)\over C} = 4000$ individuals.

\me

\me The constant $\,\tau\,$ will be the varying parameter. In the rest of the section we present different simulations highlighting the influence of $\tau$ and show how, depending on $\tau$, we can obtain drastically different behaviors, from expected evolution scenario to evolutionary suicide.
	
\begin{figure}[!ht]
\center
 \includegraphics[width=9cm]{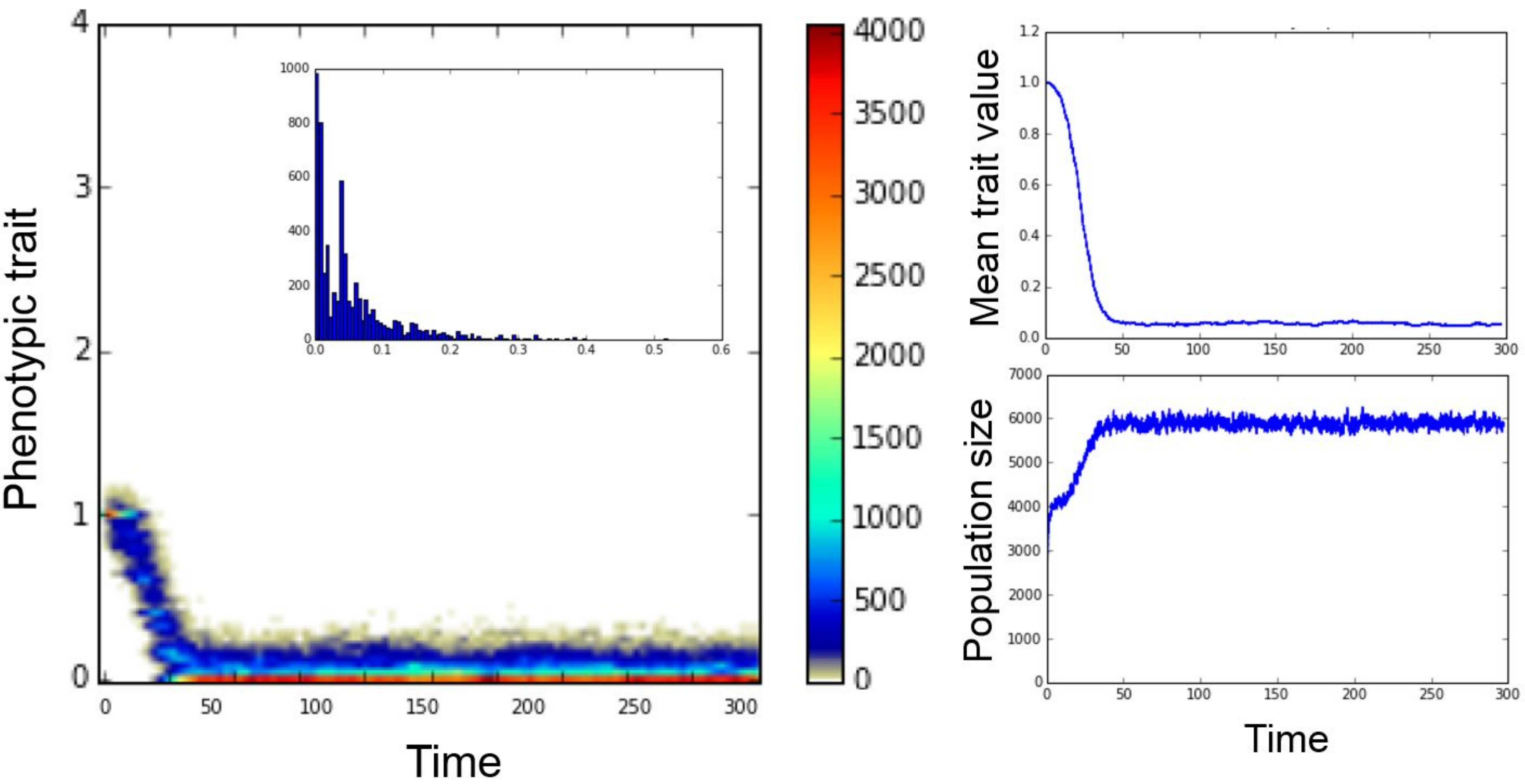}
\caption{{\small\textit{Simulations of eco-evolutionary dynamics with unilateral trait transfer. Transfer constant $\tau = 0$. Inset in main panel shows the trait distribution at the end of the simulation. }}}\label{fig:7.1}
 \end{figure}

\noindent The case $\tau = 0$ (Fig. \ref{fig:7.1}) is the null scenario without transfer. The evolution drives the population to its optimal trait  $\,0\, $ corresponding to a size at equilibrium equal to $ 1000\times {b(0)-d(0)\over C} = 6000$ individuals.

\begin{figure}[!ht]
\center
 \includegraphics[width=9cm]{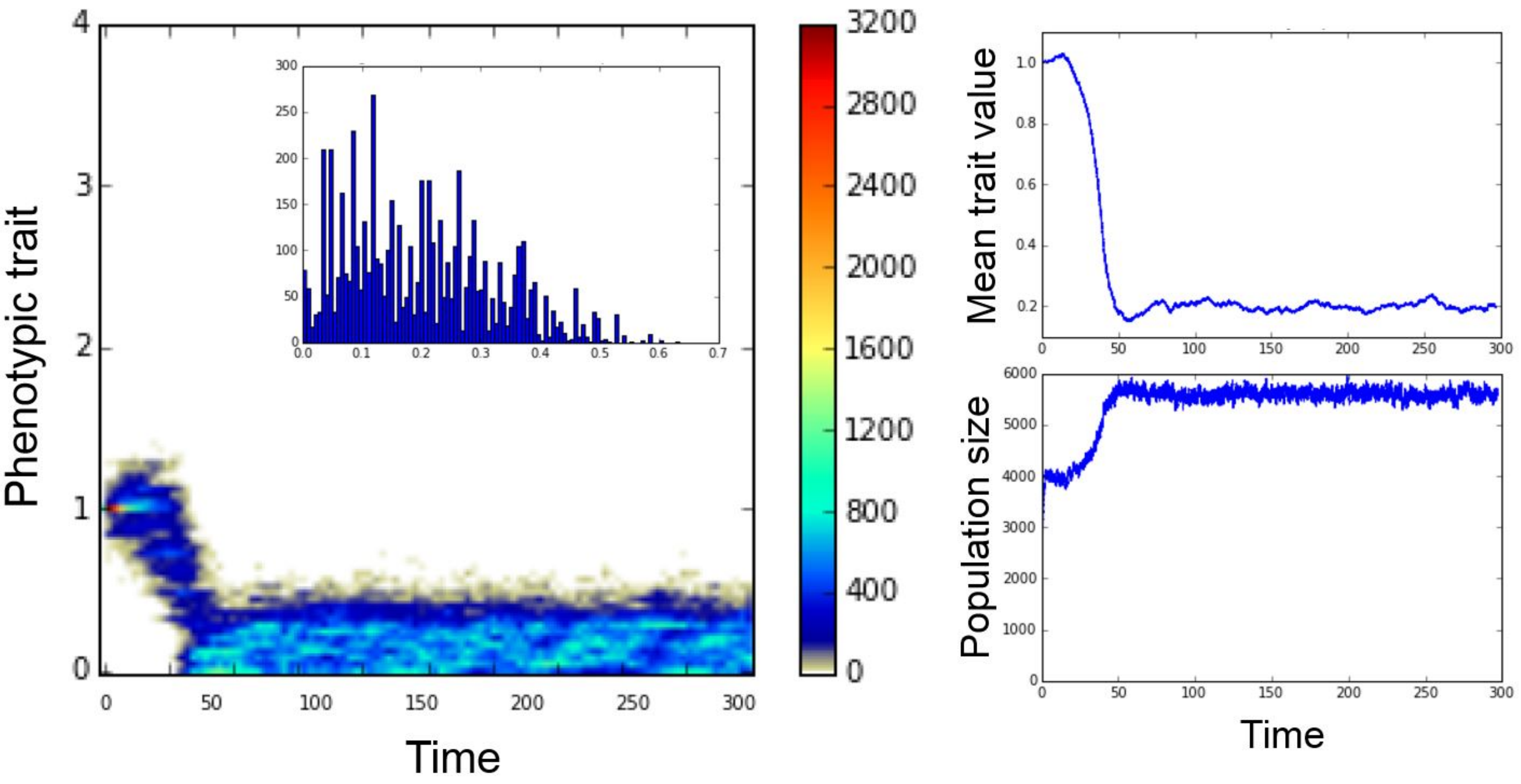}
\caption{{\small \textit{Simulations of eco-evolutionary dynamics with unilateral trait transfer. Transfer constant $\tau = 0.2$. Inset in main panel shows the trait distribution at the end of the simulation. }}}\label{fig:7.2}
 \end{figure}

\me The case $\tau=0.2$ (Fig. \ref{fig:7.2}) has characteristics similar to the case $\tau = 0$.

\begin{figure}[!ht]
\center
 \includegraphics[width=9cm]{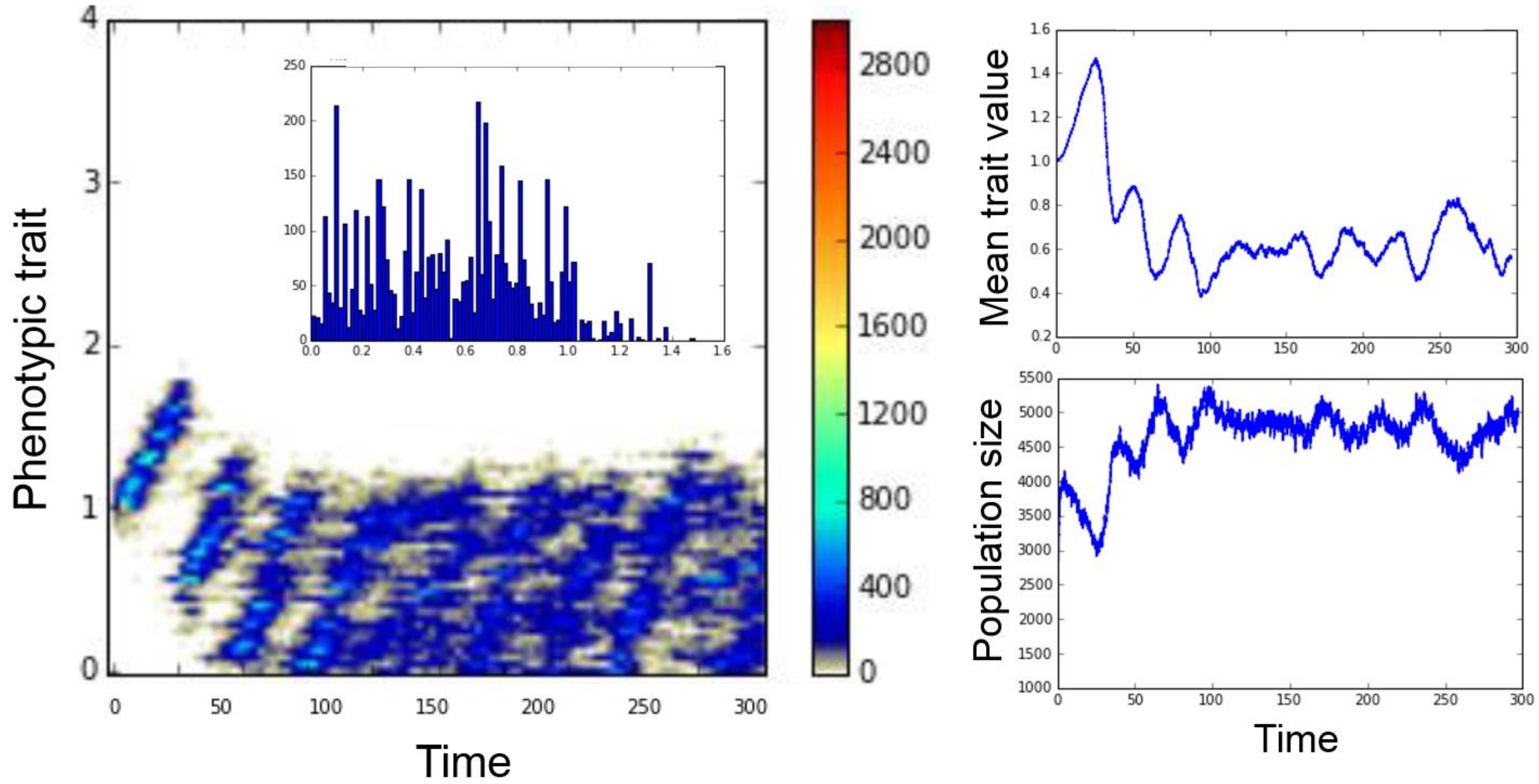}
\caption{{\small \textit{Simulations of eco-evolutionary dynamics with unilateral trait transfer. Transfer constant $\tau = 0.6$. This example illustrates a pattern of stepwise evolution caused by horizontal transfer. Inset in main panel shows the trait distribution at the end of the simulation. }}}\label{fig:7.3}
  \end{figure}

\me The evolution scenario in the case $\tau = 0.6$ (Fig. \ref{fig:7.3}) is rather different than the one for small $\tau$. High transfer  converts at first individuals to larger traits and in the same time the population decreases since for a given trait $\,x$, the equilibrium size $\,N_{eq}= {b(x)-d(x)\over C}\times 1000 = 2000(3-x)$. At some point, the population size is so small that the transfer doesn't play a role anymore leading to the
brutal resurgence of a quasi-invisible strain, issued from a few well adapted individuals with small traits.
 Computation shows that a  small trait $x_{small}$ can  invade the resident population with  trait $\overline{x}$ if $\,S(x_{small};\overline{x})=\overline{x}- x_{small} -\tau>0$.
  If such a mutant appears, it reproduces faster and its subpopulation immediately   kills the population with trait
$\overline{x}$.

\me Note that the successive resurgences drive the mean trait towards the optimal trait $0$.\\

\begin{figure}[!ht]
\center
\includegraphics[width=9cm]{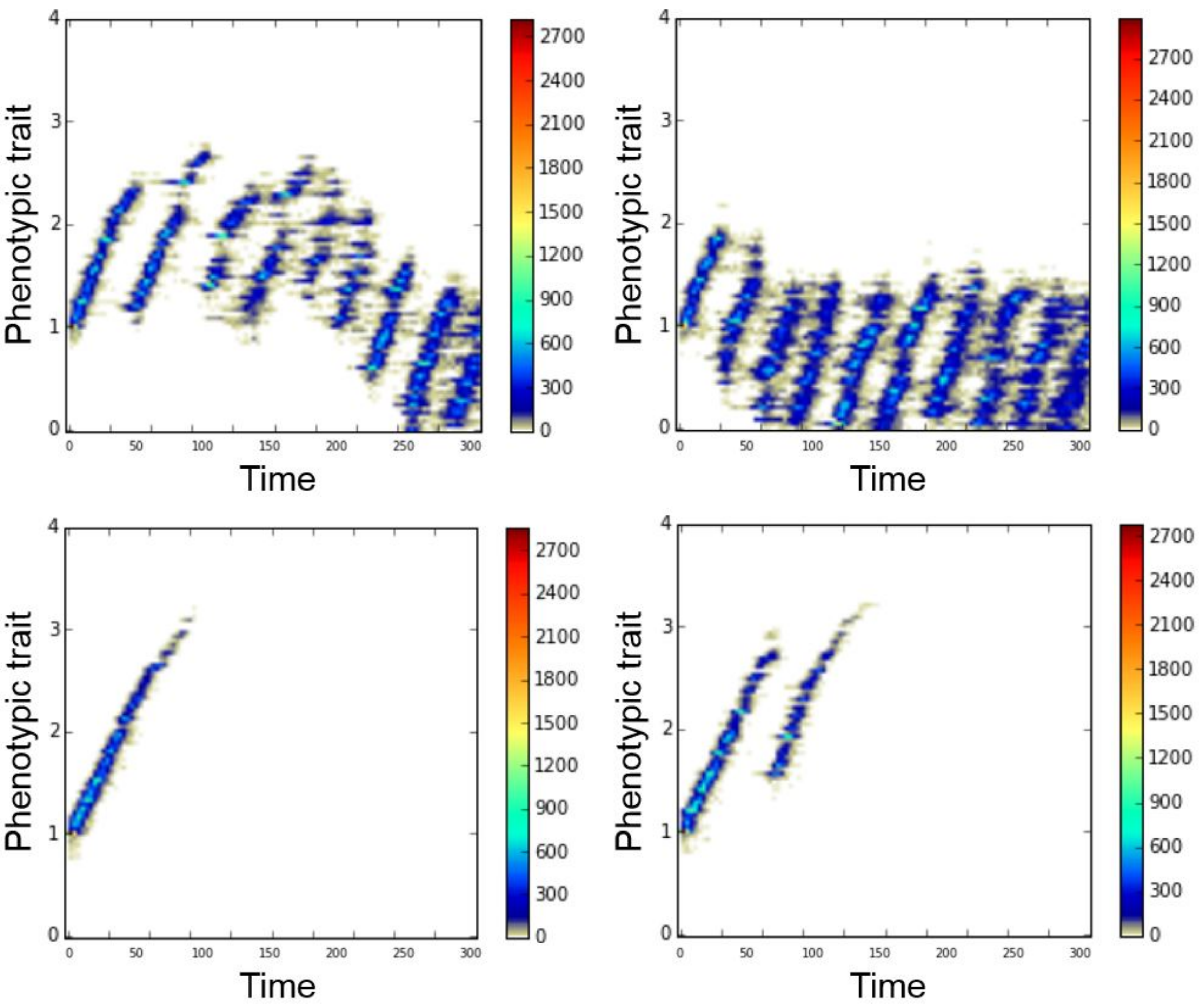}
\caption{{\small \textit{Simulations of eco-evolutionary dynamics with unilateral trait transfer. Transfer constant $\tau = 0.7$. Four simulation runs started with the same initial conditions illustrate random macroscopic evolution, including the possible occurrence of evolutionary suicide (lower panel). }}}\label{fig:7.4}
\end{figure}

\me Increasing further the transfer rate to $\tau=0.7$ (Fig. \ref{fig:7.4}), we can see either patterns as those above, with resurgences driving the mean trait towards the optimal trait, or extinctions of the population when there is no resurgence. The two simulations in the second line of Fig. \ref{fig:7.4} show evolutionary suicides: because $\tau$ is big, no small trait is left in these simulations to allow resurgence and the population reaches a state where the traits are so maladapted that the dies.

\begin{figure}[!ht]
\center
 \includegraphics[width=9cm]{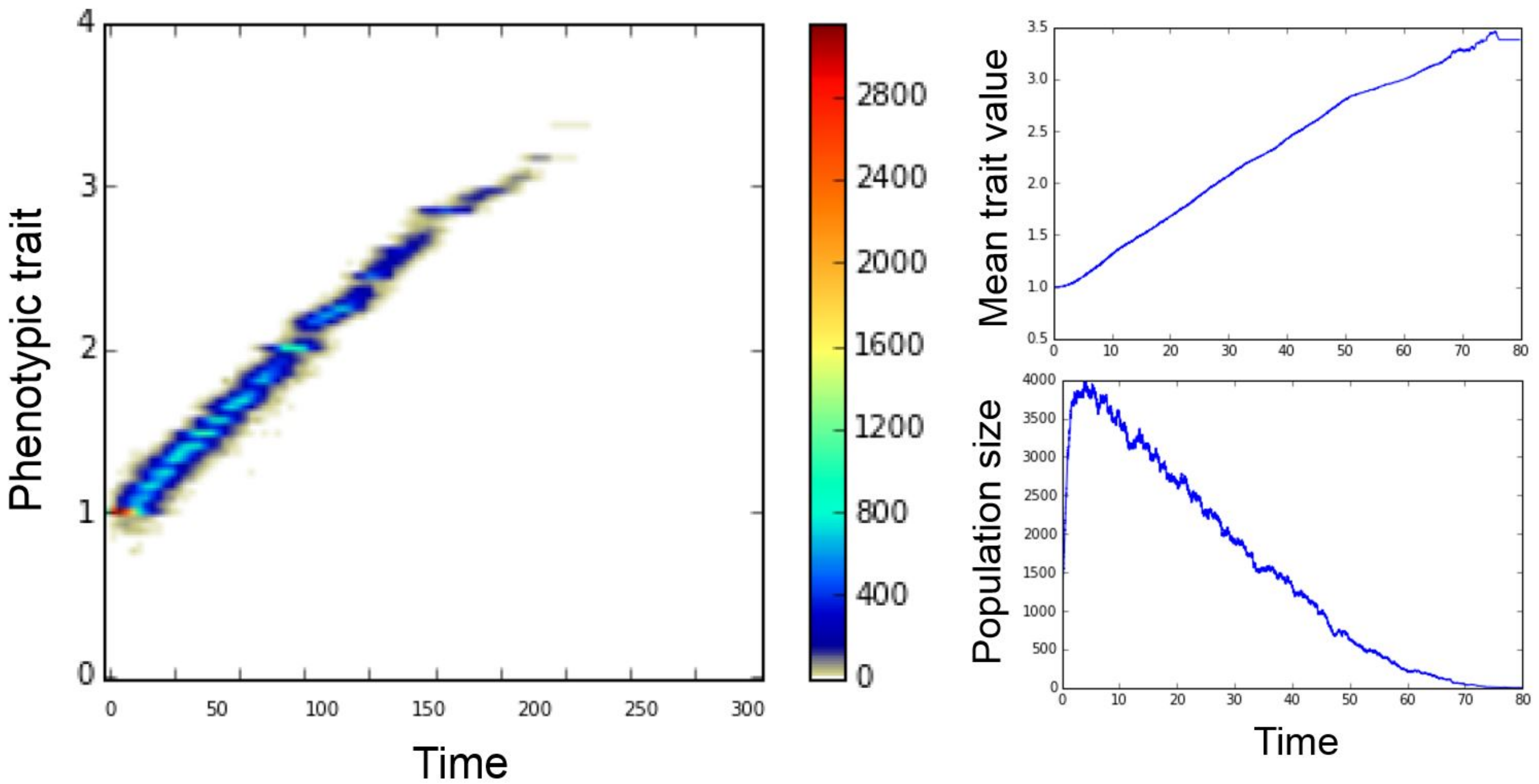}
\caption{{\small \textit{Simulations of eco-evolutionary dynamics with unilateral trait transfer. Transfer constant $\tau = 1.0$. Here evolution always leads to population extinction.}}}\label{fig:7.5}
\end{figure}

\me When $\tau=1$ (Fig. \ref{fig:7.5}), HGT impedes the population to keep a small mean trait to survive and we get evolutionary suicide in all the simulations that were done wih these parameters. The transfer drives the traits to larger and larger values, corresponding to lower and lower population sizes.

 \bi
 \emph{Acknowledgements:} S.B., S.M. and V.C.T. have been supported by  the Chair ``Mod\'elisation Math\'ematique et Biodiversit\'e" of Veolia Environnement-Ecole Polytechnique-Museum National d'Histoire Naturelle-Fondation X. V.C.T. also acknowledges support from Labex CEMPI (ANR-11-LABX-0007-01).

{\footnotesize
\providecommand{\noopsort}[1]{}\providecommand{\noopsort}[1]{}\providecommand{\noopsort}[1]{}\providecommand{\noopsort}[1]{}

}\end{document}